\newtheorem{theorem}{Theorem}[section]
\newtheorem{proposition}[theorem]{Proposition}
\newtheorem{lemma}[theorem]{Lemma}
\newtheorem{corollary}[theorem]{Corollary}
\theoremstyle{definition}
\newtheorem{definition}[theorem]{Definition}
\newtheorem{convention}[theorem]{Convention}
\newtheorem{remark}[theorem]{Remark}
\DeclareMathOperator{\res}{\upharpoonright}
\newcommand{\dom}{\operatorname{dom}}
\newcommand{\seq}[1]{\langle #1 \rangle}
\newcommand{\cred}{\leq_{\text{\upshape c}}}
\newcommand{\scred}{\leq_{\text{\upshape sc}}}
\newcommand{\uequiv}{\equiv_{\text{\upshape W}}}
\newcommand{\ured}{\leq_{\text{\upshape W}}}
\newcommand{\suequiv}{\equiv_{\text{\upshape sW}}}
\newcommand{\sured}{\leq_{\text{\upshape sW}}}
\newcommand{\nsured}{\nleq_{\text{\upshape sW}}}
\newcommand{\sjoin}{\boxplus}
\newcommand{\eval}{\operatorname{e}}
\newcommand{\diff}{\infty}
\newcommand{\monapprox}{\operatorname{a}}
\newcommand{\infcomp}[1]{\widetilde{#1}}
\newcommand{\Tred}{\leq_{\mathrm{T}}}
\newcommand{\nTred}{\nleq_{\mathrm{T}}}
\begin{document}

\title{Joins in the strong Weihrauch degrees}

\author{Damir D. Dzhafarov}
\address{Department of Mathematics\\
University of Connecticut\\
%Storrs, Connecticut U.S.A.}
341 Mansfield Road\\ Storrs, Connecticut 06269-1009 U.S.A.}
%\curraddr{}
\email{damir@math.uconn.edu}

\thanks{}

\begin{abstract}
The Weihrauch degrees and strong Weihrauch degrees are partially ordered structures representing degrees of unsolvability of various mathematical problems. Their study has been widely applied in computable analysis, complexity theory, and more recently, also in computable combinatorics. We answer an open question about the algebraic structure of the strong Weihrauch degrees, by exhibiting a join operation that turns these degrees into a lattice. Previously, the strong Weihrauch degrees were only known to form a lower semi-lattice. We then show that unlike the Weihrauch degrees, which are known to form a distributive lattice, the lattice of strong Weihrauch degrees is not distributive. Therefore, the two structures are not isomorphic.
\end{abstract}

\thanks{The author was supported in part by NSF Grant DMS-1400267. He thanks Vasco Brattka for numerous helpful comments and suggestions during the preparation of this paper, and in particular, for noticing an error in an earlier version of Proposition \ref{prop:eval}.}

\maketitle

\section{Introduction}

Weihrauch reducibility provides a framework for measuring the relative complexity of solving certain mathematical problems, and in particular, of telling when the task of solving one mathematical problem can be reduced to the task of solving another. The program of classifying mathematical problems using Weihrauch reducibility was initiated by Brattka and Gherarrdi~\cite{BG-2011b} and Gherardi and Marcone~\cite{GM-2008}. Weihrauch reducibility itself goes back to Weihrauch~\cite{Weihrauch-1992}, and has been widely deployed in computable analysis. More recently, the concept was independently re-discovered by Dorais, Dzhafarov, Hirst, Mileti, and Shafer~\cite{DDHMS-2016} in the context of computable combinatorics. The classification program can be seen as a foundational one, in the spirit of Friedman and Simpson's program of reverse mathematics (cf.\ Simpson~\cite{Simpson-2009}). In many ways, Weihrauch reducibility leads to a refinement and extension of reverse mathematics; see Hirschfeldt~\cite[Section 2.2]{Hirschfeldt-2014} or Hirschfeldt and Jockusch~\cite[Section 1]{HJ-2016} for detailed discussions.

Intuitively, a mathematical problem $\mathsf{P}$ consists of a collection of \emph{instances}, and for each instance, a collection of \emph{solutions} to this instance (in that problem).
%We can take the instances and solutions to be, e.g., subsets of the natural numbers, though the formal definition (given in the next section) is more general.
Given math problems $\mathsf{P}_0$ and $\mathsf{P}_1$, we can then informally define $\mathsf{P}_0$ to be \emph{strongly Weihrauch reducible} to $\mathsf{P}_1$ if there is an effective way to convert every instance $p$ of $\mathsf{P}_0$ into an instance $\widetilde{p}$ of $\mathsf{P}_1$, and an effective way to convert every solution $\widetilde{q}$ to $\widetilde{p}$ in $\mathsf{P}_1$ into a solution $q$ to $p$ in $\mathsf{P}_0$. This method of reducing the task of solving $\mathsf{P}_0$ to that of solving $\mathsf{P}_1$ is natural, and shows up frequently throughout mathematics (see, e.g., \cite{DDHMS-2016}, Section 1, for some specific examples). It is, however, somewhat restrictive in that the backward conversion is not allowed access to the original instance $p$ of $\mathsf{P}_0$. For this reason, we also define $\mathsf{P}_0$ to be \emph{Weihrauch reducible} to $\mathsf{P}_1$ if there is an effective way to convert every instance $p$ of $\mathsf{P}_0$ into an instance $\widetilde{p}$ of $\mathsf{P}_1$, and an effective way to convert $p$, together with any solution $\widetilde{q}$ to $\widetilde{p}$ in $\mathsf{P}_1$, into a solution $q$ to $p$ in $\mathsf{P}_0$. Both types of reductions have been examined at length in the literature, with the past few years in particular seeing a surge of interest. An updated bibliography of publications contributing to this study is maintained by Brattka~\cite{Brattka-bib}. (See also Dzhafarov~\cite{Dzhafarov-2015, Dzhafarov-2016}, and Remark \ref{rem:nonunif} below, for a non-uniform version of Weihrauch reducibility; and see Pauly~\cite{Pauly-2010} for a version in which computable transformations are replaced by continuous ones.)

In this paper, we focus on the algebraic structure of these reducibilities. For Weihrauch reducibility, this has been studied extensively, e.g., by Brattka and Gherardi~\cite{BG-2011}, Brattka and Pauly~\cite{BP-TA}, Higuchi and Pauly~\cite{HP-2013}, and others. We focus here on strong Weihrauch reducibility. It is known that the Weihrauch \emph{degrees} (i.e., the equivalence classes under Weihrauch reducibility) form a lattice under certain natural operations (see Theorem~\ref{thm:Wlatt} below). We prove the corresponding result for the strong Weihrauch degrees, thereby answering an open question (see, e.g., Brattka~\cite{Brattka-2015}, or H\"{o}lzl and Shafer~\cite{HS-2015}, Section 2). Further, we show that as in the case of the Weihrauch lattice, every countable distributive lattice can be embedded into the strong Weihrauch lattice. However, unlike in the Weihrauch case, we show that the strong Weihrauch lattice is itself not distributive. Hence, in particular, the Weihrauch degrees and strong Weihrauch degrees are not isomorphic structures.

The paper is organized as follows. In Section~\ref{S:defns}, we give some general background about Weihrauch reducibility, including precise definitions of the Weihrauch and strong Weihrauch degrees. In Section~\ref{S:main}, we define the supremum (join) operation on the strong Weihrauch degrees, and prove our main result that this turns the strong Weihrauch degrees into a lattice. Finally, in Section~\ref{S:distr}, we prove the non-distributivity of this lattice, and consider lattice embeddings.

\section{Background}\label{S:defns}

Our notation and terminology is mostly standard, following, e.g.,\ Soare~\cite{Soare-2016} and Weihrauch~\cite{Weihrauch-1987}. Throughout, we identify subsets of $\omega$ with their characteristic functions, and so regard them as elements of $2^\omega$. For convenience, if $p \in 2^\omega$ and $n \in \omega$, we will frequently write $n \in p$ and $n \notin p$ instead of $p(n) = 1$ and $p(n) = 0$, respectively, and refer to $n$ as being or not being an element of $p$. We let $\seq{\cdot,\cdot}$ denote the standard computable pairing function on $\omega$, and also the effective join on $2^\omega$ (in place of the more commonly used symbol $\oplus$). For $p \in 2^\omega$, we write $\seq{0,p}$ and $\seq{1,p}$ for $\seq{\{0\},p}$ and $\seq{\{1\},p}$, respectively. For a finite binary string $\sigma$ and a bit $i < 2$, we write $\sigma i^\omega$ for the element $p \in 2^\omega$ with $p(n) = \sigma(n)$ for all $n < |\sigma|$ and $p(n) = i$ for all $n \geq |\sigma|$. In particular, we write $0^\omega$ and $1^\omega$ for all the all-$0$ and all-$1$ infinite binary sequence, respectively.

For Turing functionals, we follow the following conventions.

\begin{convention}\label{con:functionals}
	Let $\Phi$ be a Turing functional and $p \in 2^\omega$.
	\begin{itemize}
		\item If $\Phi^p(n)[s] \downarrow$ for some $n,s \in \omega$ then also $\Phi^p(m)[s] \downarrow$ for all $m < n$.
		\item For each $s \in \omega$ there is at most one $n \in \omega$ for which $s$ is least such that $\Phi^p(n)[s] \downarrow$.
	\end{itemize}
\end{convention}

\noindent Further, we regard all Turing functionals as being $\{0,1\}$-valued, so that if $\Phi^p$ is total for some $\Phi$ and $p$, then $\Phi^p$ is an element of $2^\omega$.

We use $2^\omega$ here merely as a convenience, but will not rely on any of its specific properties as a topological space. Hence, everything in our treatment would go through equally well for Baire space in place of Cantor space.

We shall consider functions below which can take on multiple values, called \emph{multifunctions}. Formally, a multifunction $f$ from a set $X$ to a set $Y$, denoted $f : X \rightrightarrows Y$, represents that the value of $f(x)$ for each $x \in X$ is a subset of $Y$. If $f$ is a function or multifunction with domain a (possibly proper) subset of $X$, we denote this by $f : \subseteq X \to Y$ or $f : \subseteq X \rightrightarrows Y$, respectively. In this case, we refer to $f$ as a \emph{partial} function/multifunction (on $X$), and we denote its domain by $\dom(f)$.

A multifunction $f : X \rightrightarrows Y$ thus formalizes the concept of a mathematical problem, as was informally discussed in the introduction. The elements of $X$ are regarded as the instances of this problem, and for each $x \in X$, the elements of $f(x)$ are regarded as the solutions to the instance $x$ (in the problem $f$).

Unlike in the introduction, there are no restrictions above that the domains and co-domains of problems be subsets of the natural numbers, which was necessary in order to define computations from instances and solutions. The following definition will allow us to develop computability theory on a broader class of spaces for when we define Weihrauch reducibility below.

\begin{definition}
	A \emph{representation} of a set $X$ is a partial surjective function $\delta : \subseteq 2^{\omega} \to X$. The pair $(X,\delta)$ is a \emph{represented space}.
\end{definition}

Given represented spaces $(X_0,\delta_{X_0})$ and $(X_1,\delta_{X_1})$, we can define $\delta_{X_0 \sqcup X_1} : \subseteq 2^\omega \to X_0 \sqcup X_1$ by $\delta_{X_0 \sqcup X_1}(\seq{i,p}) = \seq{i,\delta_{X_i}(p)}$ for each $i < 2$ and $p \in 2^\omega$, and $\delta_{X_0 \times X_1} : \subseteq 2^\omega \to X_0 \times X_1$ by $\delta_{X_0 \times X_1}(\seq{p_0,p_1}) = \seq{\delta_{X_0}(p_0), \delta_{X_1}(p_1)}$ for all $p_0,p_1 \in 2^\omega$. These provided representations for $X_0 \sqcup X_1$ and $X_0 \times X_1$, respectively, which will play an important role in our work below.

\begin{definition}
	Let $f : \subseteq (X,\delta_X) \rightrightarrows (Y,\delta_Y)$ be a partial multifunction on represented spaces. A function $F : \subseteq 2^{\omega} \to 2^{\omega}$ is \emph{realizer} of $f$, in symbols $F \vdash f$, if $\delta_Y F(p) \in f \delta_X(p)$ for all $p \in \dom(f \delta_X)$.
\end{definition}

\begin{definition}\label{def:weihr}
	Let $f$ and $g$ be partial multifunctions on represented spaces.
	\begin{itemize}
		\item $f$ is \emph{Weihrauch reducible} to $g$, in symbols $f \ured g$, if there are Turing functionals $\Phi,\Psi : \subseteq 2^{\omega} \to 2^{\omega}$ such that $\Psi \seq{\mathrm{id}, G \Phi} \vdash f$ for all $G \vdash g$.
		\item $f$ is \emph{strongly Weihrauch reducible} to $g$, in symbols $f \sured g$, if there are Turing functionals $\Phi,\Psi : \subseteq 2^{\omega} \to 2^{\omega}$ such that $\Psi G \Phi \vdash f$ for all $G \vdash g$.	
	\end{itemize}
	If the above applies, we say that $f \ured g$ or $f \sured$ \emph{via} $\Phi$ and $\Psi$.
\end{definition}

The notion above formalizes the intuitive one of reducing one mathematical problem to another, as discussed in the introduction. We give an alternative definition, due to Dorais et al.~\cite[Definition 1.5]{DDHMS-2016}, in Section~\ref{S:distr}.

It is customary to refer to equivalence classes of under $\ured$ and $\sured$ as the \emph{Weihrauch degrees} and \emph{strong Weihrauch degrees}, respectively. Formally, of course, these objects are not sets but proper classes. Thus, we implicitly identify each partial multifunction $f : \subseteq (X,\delta_X) \rightrightarrows (Y,\delta_Y)$ with $\delta_Y^{-1} \circ f \circ \delta_X : \subseteq 2^\omega \to 2^\omega$, whereby the (strong) Weihrauch degrees can be regarded just as equivalence classes of multifunctions on Cantor space.

The following definition gives two important operations on multifunctions.

\begin{definition}
	Let $f : \subseteq (X_0,\delta_{X_0}) \rightrightarrows (Y_0,\delta_{Y_0})$ and $g : \subseteq (X_1,\delta_{X_1}) \rightrightarrows (Y_1,\delta_{Y_1})$ be partial multifunctions on represented spaces.
	\begin{itemize}
		\item $f \sqcup g : \subseteq (X_0 \sqcup X_1,\delta_{X_0 \sqcup X_1}) \rightrightarrows (Y_0 \sqcup Y_1,\delta_{Y_0 \sqcup Y_1})$ is defined by
		\[
			f(\seq{0,x}) = \{0\} \times f(x)
		\]
		for all $x \in \dom(f)$, and
		\[
			f(\seq{1,x}) = \{1\} \times g(x)
		\]
		for all $x \in \dom(g)$.
		\item $f \sqcap g : \subseteq (X_0 \times X_1,\delta_{X_0 \times X_1}) \rightrightarrows (Y_0 \sqcup Y_1,\delta_{Y_0 \sqcup Y_1})$ is defined by
		\[
			f(\seq{x,y}) = (\{0\} \times f(x)) \cup (\{1\} \times g(y))
		\]
		for all $x \in \dom(f)$ and $y \in \dom(g)$.
	\end{itemize}
\end{definition}

To save on notation, given a degree structure defined as the set of equivalence classes under some reducibility, we identify degree-invariant operations on the elements of the underlying space with operations on the degrees themselves. It is easy to see that both $\sqcup$ and $\sqcap$ are invariant under $\uequiv$, and we have the following result establishing their main properties.

\begin{theorem}[Pauly~\cite{Pauly-2010}, Theorem 4.22; Brattka and Gherardi~\cite{BG-2011b}, Theorem 3.14]\label{thm:Wlatt}
	The Weihrauch degrees form a bounded distributive lattice under $\ured$, with $\sqcup$ as supremum and $\sqcap$ as infimum.
\end{theorem}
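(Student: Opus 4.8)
The plan is to verify separately that $\sqcup$ gives the supremum and $\sqcap$ the infimum for $\ured$, that the resulting lattice is bounded, and that it is distributive; throughout, each reduction is witnessed by exhibiting the forward and backward Turing functionals $\Phi,\Psi$ of Definition~\ref{def:weihr}, and I take for granted (as already noted in the text) that $\sqcup$ and $\sqcap$ are $\uequiv$-invariant, so that these operations descend to the degrees.

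For the supremum, I first note the two upper bounds $f \ured f \sqcup g$ and $g \ured f \sqcup g$: the forward functional tags an instance $x$ of $f$ as $\seq{0,x}$, and the backward functional strips the leading tag from a solution $\seq{0,q}$. For the least-upper-bound property, suppose $f \ured h$ via $\Phi_0,\Psi_0$ and $g \ured h$ via $\Phi_1,\Psi_1$. Given an instance $\seq{i,x}$ of $f \sqcup g$, the forward functional reads the tag $i$ and applies $\Phi_i$ to $x$, producing an instance of $h$; after a realizer of $h$ responds, the backward functional must recover $i$, which it does from the original instance $\seq{i,x}$ rather than from the tagless $h$-response, applies the matching $\Psi_i$, and re-tags the result as $\seq{i,\cdot}$. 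This reliance on the $\seq{\mathrm{id},\,\cdot}$ component is characteristic of $\ured$ as opposed to $\sured$, and foreshadows exactly where the two reducibilities will later diverge.

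For the infimum, the two lower bounds $f \sqcap g \ured f$ and $f \sqcap g \ured g$ come from projecting an instance $\seq{x,y}$ onto $x$ and then tagging any returned solution as $\seq{0,\cdot}$, which lands in $(\{0\} \times f(x)) \cup (\{1\} \times g(y))$. For the greatest-lower-bound property, suppose $h \ured f$ via $\Phi_0,\Psi_0$ and $h \ured g$ via $\Phi_1,\Psi_1$. Given an instance $z$ of $h$, the forward functional forms the pair $\seq{\Phi_0(z),\Phi_1(z)}$; a realizer of $f \sqcap g$ returns some $\seq{b,t}$ with $t \in f(\Phi_0(z))$ if $b=0$ and $t \in g(\Phi_1(z))$ if $b=1$, and the backward functional reads $b$ and applies the corresponding $\Psi_b$ to $z$ and $t$. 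The conceptual content is that $\sqcap$ lets the solver of $f \sqcap g$ choose which of the two problems to answer, and having both $h \ured f$ and $h \ured g$ lets us exploit whichever choice it makes.

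Finally, for boundedness I take $\bot$ to be the nowhere-defined multifunction, for which $\bot \ured g$ holds vacuously for every $g$, and $\top$ to be a multifunction with no realizer (e.g.\ one defined everywhere with empty solution sets), so that $f \ured \top$ holds for every $f$ because the quantification over realizers of $\top$ is empty. For distributivity it suffices to prove $f \sqcap (g \sqcup h) \ured (f \sqcap g) \sqcup (f \sqcap h)$, since the reverse reduction $(f \sqcap g) \sqcup (f \sqcap h) \ured f \sqcap (g \sqcup h)$ is automatic from the lattice axioms already established. The forward functional sends an instance $\seq{x,\seq{i,v}}$ to $\seq{i,\seq{x,v}}$, routing into the $f \sqcap g$ summand when $i=0$ and into the $f \sqcap h$ summand when $i=1$; a realizer responds with $\seq{i,\seq{b,t}}$, and the backward functional outputs $\seq{0,t}$ when $b=0$ (so $t \in f(x)$) and $\seq{1,\seq{i,t}}$ when $b=1$ (so that $\seq{i,t} \in (g \sqcup h)(\seq{i,v})$). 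I expect the main obstacle to be exactly this last verification: keeping the three levels of tagging straight, checking in each case that the emitted string lands in the correct solution set, and confirming that the $\sqcup$-tag $i$ is preserved by the forward map and recoverable for the backward map. Once this routing is checked, distributivity — and hence the full theorem — follows.
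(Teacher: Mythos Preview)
The paper does not give its own proof of this theorem; it is quoted from Pauly~\cite{Pauly-2010} and Brattka--Gherardi~\cite{BG-2011b}, with only the remark afterward that the same argument makes $\sqcap$ the infimum for $\sured$, and a pointer to~\cite{BP-TA} for the top and bottom elements. So there is nothing in the present paper to compare against directly.

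Your argument is correct and is essentially the standard one from the cited references. You correctly isolate the one place where $\ured$ (as opposed to $\sured$) is genuinely used, namely the least-upper-bound step for $\sqcup$, where the backward functional must read the tag $i$ off the original instance; and your distributivity verification (routing $\seq{x,\seq{i,v}}\mapsto\seq{i,\seq{x,v}}$ and then reassembling according to $b$ and $i$) is exactly the usual one. The only point worth flagging is boundedness: the paper explicitly calls the precise definitions of top and bottom ``somewhat complicated'' and defers to~\cite{BP-TA}, whereas you take the empty-domain multifunction for $\bot$ and a multifunction with empty solution sets for $\top$. These choices work under the conventions of this paper, but some treatments in the literature restrict to multifunctions with nonempty images on their domain, in which case one must construct $\top$ differently; this is a conventional rather than mathematical issue, and your argument is fine as stated.
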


The proof of the theorem also shows that $\sqcap$ gives the infimum operation for the strong Weihrauch degrees, and hence that these form a lower semi-lattice. The precise definitions of the top and bottom elements in Weihrauch and strong Weihrauch degrees are somewhat complicated, but as these are not needed for our work here, we refer the reader to~\cite[Sections 2.1]{BP-TA} for details.

\section{Main construction}\label{S:main}

We begin in this section with a series of computability-theoretic definitions, leading up to the definition of the supremum operation in the strong Weihrauch degrees.

\begin{definition}\label{def:monapprox}
	A \emph{monotone approximation}\footnote{Monotone approximations were also considered, in an unrelated context, by Dzhafarov and Igusa~\cite{DI-2017}, where they were called \emph{partial oracles}.} is an element $a \in 2^\omega$ with the following properties:
	\begin{itemize}
		\item every element of $a$ is of the form $\seq{n,s,i}$, where $n,s \in \omega$ and $i < 2$;
		\item for all $n,s,t \in \omega$ and $i,j < 2$, if $\seq{n,s,i}, \seq{n,t,j} \in a$ then $s = t$ and $i = j$;
		\item for all $m,n,s,t \in \omega$ and $i,j < 2$, if $\seq{m,s,i}, \seq{n,t,j} \in a$ and $m < n$ then $s < t$.
	\end{itemize}
	The monotone approximation is \emph{total} if for every $n \in \omega$ there is an $s \in \omega$ and $i < 2$ such that $\seq{n,s,i} \in a$.
\end{definition}

%\noindent 

An important class of monotone approximations for our purposes come from Turing computations.

\begin{definition}
	Given a Turing functional $\Psi$ and $p \in 2^\omega$, let $\monapprox_{\Psi(p)} \in 2^\omega$ consist of all the $\seq{n,s,i}$ where $n,s \in \omega$, $i < 2$, and $s$ is least such that $\Psi(p)(n)[s] \downarrow = i$.	
\end{definition}

Convention~\ref{con:functionals} ensures that $\monapprox_{\Psi(p)}$ is indeed a monotone approximation, as well as the following basic facts.

\begin{proposition}\label{prop:approx}
	For each Turing functional $\Psi$ and each $p \in 2^\omega$, $\monapprox_{\Psi(p)}$ is uniformly computable from $p$ and (an index for) $\Psi$. Further, if $\Psi(p)$ is total (i.e., is an element of $2^\omega$) then $\monapprox_{\Psi(p)}$ is total as a monotone approximation.
\end{proposition}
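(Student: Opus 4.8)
The plan is to exhibit, uniformly in an index for $\Psi$, a Turing functional that computes $\monapprox_{\Psi(p)}$ with oracle $p$, by directly deciding membership of an arbitrary number in this set according to its definition, and then to read off totality from the hypothesis that $\Psi(p)$ is total.

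First I would describe the decision procedure. On input $m$, use the (computable) inverse of the pairing function to check whether $m$ has the form $\seq{n,s,i}$ for some $n,s \in \omega$ and $i < 2$; if it does not, output $0$, since no such $m$ lies in $\monapprox_{\Psi(p)}$. Otherwise, with oracle $p$, simulate the computations $\Psi(p)(n)[t]$ for $t = 0, 1, \dots, s$ in turn. Output $1$ if $\Psi(p)(n)[t]\uparrow$ for every $t < s$ while $\Psi(p)(n)[s]\downarrow = i$, and output $0$ in every other case — that is, if some earlier stage already converges, or if stage $s$ diverges, or if it converges to $1-i$ (recalling that a computation retains its value once it converges, so in this last case no stage at all witnesses value $i$). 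By the definition of $\monapprox_{\Psi(p)}$, this procedure outputs $1$ precisely when $s$ is least such that $\Psi(p)(n)[s]\downarrow = i$, i.e.\ precisely when $\seq{n,s,i} \in \monapprox_{\Psi(p)}$. Each simulation halts after finitely many stages and queries only a finite initial segment of $p$, and the whole construction depends on $\Psi$ only through a fixed index, so $\monapprox_{\Psi(p)} \Tred p$ uniformly in $p$ and an index for $\Psi$, which is the first assertion.

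For the second assertion, suppose $\Psi(p)$ is total, so $\Psi(p)(n)\downarrow$ for every $n \in \omega$. Fix $n$, let $s$ be least with $\Psi(p)(n)[s]\downarrow$, and set $i = \Psi(p)(n) \in \set{0,1}$. Then $s$ is least with $\Psi(p)(n)[s]\downarrow = i$, so $\seq{n,s,i} \in \monapprox_{\Psi(p)}$. As $n$ was arbitrary, for every $n$ there are $s \in \omega$ and $i < 2$ with $\seq{n,s,i} \in \monapprox_{\Psi(p)}$, which is exactly the condition in Definition~\ref{def:monapprox} for $\monapprox_{\Psi(p)}$ to be total.

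There is no genuine obstacle here: the proposition just unwinds the definition of $\monapprox_{\Psi(p)}$ using the standard facts that Turing computations are effectively simulable with finite use and that convergence is stage-by-stage decidable. The only points requiring a little care are that the ``least stage'' clause in the definition forces the procedure to verify that all earlier stages diverge, and that every step is performed uniformly in the given index for $\Psi$, independently of whether $\Psi(p)$ happens to be total.
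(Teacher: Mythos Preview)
Your proof is correct and follows exactly the intended approach: the paper's own proof is simply the word ``Immediate,'' and what you have written is precisely the routine verification that the definition of $\monapprox_{\Psi(p)}$ yields a uniformly $p$-computable set and that totality of $\Psi(p)$ entails totality of the approximation.
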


\begin{proof}
	Immediate.	
\end{proof}

\begin{definition}
	Let $\eval : \subseteq 2^\omega \to 2^\omega$ be the partial function with domain the set of all total monotone approximations $a$, such that for any such $a$ and all $n \in \omega$ and $i < 2$ we have $\eval(a)(n) = i$ if and only if $\seq{n,s,i} \in a$ for some $s \in \omega$.
\end{definition}

\begin{proposition}\label{prop:eval}
\
	\begin{enumerate}
		\item The partial function $\eval$ is a Turing functional.
		\item For each Turing functional $\Psi$ and each $p \in 2^\omega$, if $\Psi(p)$ is total then $\eval(\monapprox_{\Psi(p)}) = \Psi(p)$.
	\end{enumerate}
\end{proposition}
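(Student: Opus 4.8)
The plan is to derive (2) quickly from the definitions together with Proposition~\ref{prop:approx}, and to put the real work into exhibiting the Turing functional needed for (1). For part (2), fix $\Psi$ and $p$ with $\Psi(p)$ total. By Proposition~\ref{prop:approx}, $\monapprox_{\Psi(p)}$ is a total monotone approximation, hence an element of $\dom(\eval)$. It then suffices to unwind definitions: for $n \in \omega$ and $i < 2$ we have $\eval(\monapprox_{\Psi(p)})(n) = i$ iff $\seq{n,s,i}\in\monapprox_{\Psi(p)}$ for some $s$, iff there is a (least) stage $s$ with $\Psi(p)(n)[s]\downarrow = i$, iff $\Psi(p)(n) = i$ --- the last equivalence because $\Psi(p)(n)$ is defined and a Turing computation does not converge to two different values. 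Hence $\eval(\monapprox_{\Psi(p)})$ and $\Psi(p)$ agree on every $n$.

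For part (1) I would describe a Turing functional $\Gamma$ and verify that, as a partial function, $a \mapsto \Gamma^a$ coincides with $\eval$; that is, $\Gamma^a$ is total precisely when $a$ is a total monotone approximation, and in that case $\Gamma^a = \eval(a)$. On oracle $a$, the computation of $\Gamma^a(n)$ runs through stages $t = 0, 1, 2, \ldots$, at stage $t$ querying $a(t)$ and maintaining a finite list of recorded ``entries'' $\seq{m,s,i} \in a$. If at a stage $t$ with $t \in a$ a violation of one of the three clauses of Definition~\ref{def:monapprox} becomes visible --- namely (a) $t$ is not of the form $\seq{m,s,i}$ with $i < 2$; (b) $t = \seq{m,s,i}$ while an entry $\seq{m,s',i'}$ with $(s',i')\neq(s,i)$ has already been recorded; or (c) two recorded entries $\seq{m',s',i'}$ and $\seq{m'',s'',i''}$ have $m' < m''$ but $s' \geq s''$ --- then the computation falls into an infinite loop. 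Otherwise $t$ is recorded when appropriate, and once entries for all of $0, 1, \ldots, n$ have been recorded (and $t \geq n$, say), $\Gamma^a(n)$ halts with output the last coordinate of the recorded entry for $n$. Since entries are recorded in increasing order of first coordinate, the computation on input $n$ subsumes those on smaller inputs, and after a little padding $\Gamma$ meets Convention~\ref{con:functionals}.

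The verification splits into two inclusions. If $a$ is a total monotone approximation, no violation is ever seen and, for each $n$, all the entries $\seq{0,s_0,i_0},\ldots,\seq{n,s_n,i_n} \in a$ have appeared by stage $\max_{m \leq n}\seq{m,s_m,i_m}$; so $\Gamma^a(n)$ halts with $i_n = \eval(a)(n)$, whence $\Gamma^a$ is total and equals $\eval(a)$. Conversely, if $a$ is not a total monotone approximation, then either a finite set of elements of $a$ witnesses a failure of (a), (b), or (c) --- in which case $\Gamma^a(n)$ diverges for every $n$ large enough that the computation is forced past those elements --- or $a$ is a partial monotone approximation lacking an entry for some least $m$, in which case $\Gamma^a(m)$ records entries for $0,\ldots,m-1$ and then searches forever. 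Either way $\Gamma^a$ is not total, so $\dom(\Gamma) = \dom(\eval)$. The one point requiring genuine care is exactly this domain computation: the defining conditions of a monotone approximation, and of totality, are not decidable from $a$, so one cannot test ``$a \in \dom(\eval)$'' and only then compute; the functional must instead hunt for the value and for a violation at once and loop the instant a violation surfaces, and one must check this really trims the domain down to the total monotone approximations --- in particular that $\Gamma^a$ does not inadvertently remain total on some non-total or non-monotone $a$. Conforming to Convention~\ref{con:functionals} afterward is routine.
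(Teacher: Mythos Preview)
Your proof is correct and follows essentially the same approach as the paper's: both arguments compute $\eval(a)(n)$ by simultaneously searching $a$ for an entry $\seq{n,s,i}$ while monitoring for violations of Definition~\ref{def:monapprox}, and both handle part~(2) by a direct unwinding of definitions after invoking Proposition~\ref{prop:approx}. Your treatment of part~(1) is more explicit than the paper's---in particular, you spell out the domain verification (that $\Gamma^a$ is total exactly when $a$ is a total monotone approximation) and flag the need for the $t \geq n$ clause to force the computation past any witnessed violation---whereas the paper sketches this in a couple of sentences, but the underlying idea is the same.
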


\begin{proof}
	For part (1), fix $p \in 2^\omega$. We can uniformly computably check, for each $k \in \omega$, whether the defining conditions of $p$ being a monotone approximation hold for all numbers less than $k$. Now for each $k$ for which this is the case and for each $n < k$, $\eval(p)(n)$ is computed by searching through $p$ until, if ever, an $s \in \omega$ and $i < 2$ are found with $\seq{n,s,i} \in p$, in which case the output is $i$. Thus, if $p$ is a total monotone approximation, $\eval(p)(n)$ will be defined for all $n$, and hence $\eval(p)$ will be defined as an element of $2^\omega$. Otherwise, either $p$ fails to be a monotone approximation, or it fails to be total, and in both cases $\eval(p)(n)$ will be undefined for some $n$. Thus, $\eval$ is a Turing functional with the desired domain.
	
	For part (2), note that by Proposition~\ref{prop:approx}, $\monapprox_{\Psi(p)}$ is total, so $\eval(\monapprox_{\Psi(p)})$ is an element of $2^\omega$. Now by definition, for all $n \in \omega$ we have that $\eval(\monapprox_{\Psi(p)})(n) = i$ if and only if $\seq{n,s,i} \in \monapprox_{\Psi(p)}$ for some $s$, if and only if $\Psi(p)(n) \downarrow = i$.
\end{proof}

In what follows, if $Y$ is any set, we use $\diff_Y$ to denote a fixed element not in $Y$.

\begin{definition}
	Let $(Y,\delta_Y)$ be a represented space.
	\begin{enumerate}
		\item Let $\infcomp{Y} = Y \cup \{\diff_Y\}$.
		\item Define $\delta_{\infcomp{Y}} : 2^\omega \to \infcomp{Y}$ by
			\[
				\delta_{\infcomp{Y}}(p) =
				\begin{cases}
					\delta_{Y} \eval(p) & \text{if } p \in \dom(\eval) \text{ and } \eval(p) \downarrow \in \dom(\delta_Y),\\
					\diff_Y & \text{otherwise}
				\end{cases}
			\]
			for all $p \in 2^\omega$.
	\end{enumerate}

\end{definition}

Clearly, $\delta_{\infcomp{Y}}$ is a representation of $\infcomp{Y}$. The definition gives rise to the following operation on partial multifunctions.

\begin{definition}
	Let $f : \subseteq (X_0,\delta_{X_0}) \rightrightarrows (Y_0,\delta_{Y_0})$ and $g : \subseteq (X_1,\delta_{X_1}) \rightrightarrows (Y_1,\delta_{Y_1})$ be partial multifunctions on represented spaces. We define
	\[
		f \sjoin g : \subseteq (X_0 \sqcup X_1,\delta_{X_0 \sqcup X_1}) \rightrightarrows (\infcomp{Y_0} \times \infcomp{Y_1},\delta_{\infcomp{Y_0} \times \infcomp{Y_1}})
	\]
	by
	\[
		(f \sjoin g)(\seq{0,x}) = f(x) \times \infcomp{Y_1}
	\]
	for all $x \in \dom(X_0)$, and
	\[
		(f \sjoin g)(\seq{1,x}) = \infcomp{Y_0} \times g(x)
	\]
	for all $x \in \dom(X_1)$. 
\end{definition}

It is not difficult to check that $\sjoin$ is invariant, commutative, and associative, up to strong Weihrauch equivalence. We are now ready to prove our main theorem, that the above definition gives the supremum operation on the strong Weihrauch degrees\footnote{The definition of $\sjoin$ has a product on the input side and a co-product on the output, unlike the definition of $\sqcup$, which has a product on both sides. (In this sense, $\sjoin$ is dual to the definition of $\sqcap$, which has a co-product on the input and a product on the output.) We learn from Vasco Brattka [personal communication] that Peter Hertling and he also attempted to construct a supremum for the strong Weihrauch degrees of this form. Unfortunately, their approach was unsuccessful because it did not consider the move from $Y_0 \times Y_1$ to the completed space $\infcomp{Y}_0 \times \infcomp{Y}_1$. This points to the importance of the addition of the $\diff_{Y_0}$ and $\diff_{Y_1}$ elements.}.

\begin{theorem}\label{thm:main}
	Let $f : \subseteq (X_0,\delta_{X_0}) \rightrightarrows (Y_0,\delta_{Y_0})$ and $g : \subseteq (X_1,\delta_{X_1}) \rightrightarrows (Y_1,\delta_{Y_1})$ be partial multifunctions on represented spaces. Then $f \sjoin g$ is the supremum of $f$ and $g$ under $\sured$.
\end{theorem}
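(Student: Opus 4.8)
The plan is to verify the two defining properties of a supremum: first that $f \sured f \sjoin g$ and $g \sured f \sjoin g$, and second that $f \sjoin g$ is the least such upper bound, i.e.\ if $f \sured h$ and $g \sured h$ then $f \sjoin g \sured h$. Throughout I will work with realizers on Cantor space, using the identification of multifunctions with partial functions $\subseteq 2^\omega \to 2^\omega$ described after Definition~\ref{def:weihr}.

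For the upper bound direction, consider $f$ (the case of $g$ is symmetric). An instance $x$ of $f$, coded by some $p \in \dom(\delta_{X_0})$, should be sent to the instance $\seq{0,x}$ of $f \sjoin g$; so $\Phi$ is the functional $p \mapsto \seq{0,p}$. Given any realizer $H$ of $f \sjoin g$, the output $H(\seq{0,p})$ codes a pair in $f(x) \times \infcomp{Y_1}$; its first coordinate is a $\delta_{\infcomp{Y_0}}$-name of an element of $f(x)$, hence an honest element of $Y_0$ (not $\diff_{Y_0}$). The key point is that $\delta_{\infcomp{Y_0}} = \delta_{Y_0} \circ \eval$ on its domain, so applying the Turing functional $\eval$ from Proposition~\ref{prop:eval} to that first coordinate produces a genuine $\delta_{Y_0}$-name of the solution. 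Thus $\Psi$ extracts the first coordinate and composes with $\eval$; this is a legitimate Turing functional and $\Psi H \Phi \vdash f$. Note this uses no access to the original input, so the reduction is strong.

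For the "least" direction, suppose $f \sured h$ via $\Phi_0, \Psi_0$ and $g \sured h$ via $\Phi_1, \Psi_1$. Given an instance of $f \sjoin g$ — coded by some $\seq{i,p}$ with $i < 2$ — the forward functional $\Phi$ first computably reads the bit $i$, then runs $\Phi_i$ on $p$ to produce an instance of $h$. Given a realizer $H$ of $h$, we have $H\Phi(\seq{i,p}) = H\Phi_i(p)$, and $\Psi_i$ applied to this is a $\delta_{Y_i}$-name of a solution to the $f$- or $g$-instance respectively. The backward functional $\Psi$ must, from $H\Phi_i(p)$ alone (no access to $\seq{i,p}$!), produce a $\delta_{\infcomp{Y_0} \times \infcomp{Y_1}}$-name of a pair whose $i$-th coordinate is a valid solution and whose other coordinate is allowed to be anything in $\infcomp{Y_{1-i}}$ — in particular $\diff_{Y_{1-i}}$. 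Here is where the completed spaces earn their keep: $\Psi$ does not know $i$, so it runs \emph{both} $\Psi_0$ and $\Psi_1$ on its input and outputs the pair $\seq{\monapprox_{\Psi_0(\cdot)}, \monapprox_{\Psi_1(\cdot)}}$ of monotone approximations, i.e.\ it outputs the join of the two $\monapprox$-streams. By Proposition~\ref{prop:eval}(2), for the correct index $i$ the component $\monapprox_{\Psi_i(H\Phi_i(p))}$ evaluates under $\eval$ to $\Psi_i(H\Phi_i(p))$, a valid $\delta_{Y_i}$-name; for the wrong index $1-i$, the functional $\Psi_{1-i}$ run on an input it was never designed for may be partial or may fail to land in $\dom(\delta_{Y_{1-i}})$, but then $\delta_{\infcomp{Y_{1-i}}}$ of that stream is simply $\diff_{Y_{1-i}}$, which is a permitted value. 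Either way the resulting pair lies in $(f \sjoin g)(\seq{i,x})$.

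The main obstacle is exactly this last asymmetry: the backward reduction is denied the forward input, so it cannot tell which of $\Psi_0, \Psi_1$ is the "live" one, and running the wrong one risks divergence or a nonsense output. The resolution — and the conceptual heart of the construction — is that $\infcomp{Y_0} \times \infcomp{Y_1}$ absorbs such failures into the harmless element $\diff$, while the $\monapprox$/$\eval$ machinery guarantees that a convergent, valid computation on the live coordinate survives intact. Once this is set up, checking that $\Phi$ and $\Psi$ are genuine Turing functionals and that the realizer conditions hold is routine, using Propositions~\ref{prop:approx} and~\ref{prop:eval}.
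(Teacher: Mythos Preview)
Your proposal is correct and follows essentially the same approach as the paper's proof: the upper-bound direction uses $\Phi(p)=\seq{i,p}$ and $\Psi(\seq{q_0,q_1})=\eval(q_i)$, and the least-upper-bound direction uses $\Phi(\seq{i,p})=\Phi_i(p)$ together with $\Psi(q)=\seq{\monapprox_{\Psi_0(q)},\monapprox_{\Psi_1(q)}}$, exactly as you describe. You have also correctly identified the conceptual crux, namely that the completion $\infcomp{Y_0}\times\infcomp{Y_1}$ absorbs whatever the ``wrong'' functional $\Psi_{1-i}$ does on an out-of-spec input, while Propositions~\ref{prop:approx} and~\ref{prop:eval} guarantee the live coordinate survives.
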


\begin{proof}
	Fix $f$ and $g$. We divide our proof into the following two lemmas.
	
	\begin{lemma}\label{lem:main1}
		$f \sured f \sjoin g$ and $g \sured f \sjoin g$.
	\end{lemma}
	
	\begin{proof}
		For each $i < 2$, let $\Phi_i : 2^\omega \to 2^\omega$ be the map $p \mapsto \seq{i,p}$, and let $\Psi_i : 2^\omega \to 2^\omega$ be the map $\seq{q_0,q_1} \mapsto \eval(q_i)$. Note that $\Psi_0$ and $\Psi_1$ are Turing functionals by Proposition~\ref{prop:eval}. We show that $f \sured f \sjoin g$ via $\Phi_0$ and $\Psi_0$; a symmetric argument shows that $g \sured f \sjoin g$ via $\Phi_1$ and $\Psi_1$.
	
		Suppose $H \vdash f \sjoin g$ and fix any $p \in \dom(f \delta_{X_0})$. We must show that
		\[
			\delta_{Y_0} \Psi_0 H \Phi_0(p) = \delta_{Y_0} \Psi_0 H(\seq{0,p}) \in f \delta_{X_0}(p).
		\]
		Since $H \vdash f \sjoin g$ and $\seq{0,p} \in \dom((f \sjoin g)(\delta_{X_0 \sqcup X_1}))$, we have
		\begin{eqnarray*}
			\delta_{\infcomp{Y_0} \times \infcomp{Y_1}}  H(\seq{0,p}) & \in & (f \sjoin g)(\delta_{X_0 \sqcup X_1})(\seq{0,p})\\
			& = & (f \sjoin g)(\seq{0,\delta_{X_0}(p)})\\
			& = & f \delta_{X_0}(p) \times Y_1,
		\end{eqnarray*}
		Thus, $\delta_{\infcomp{Y_0} \times \infcomp{Y_1}}  H(\seq{0,p})$ is a pair $\seq{y,z}$ with $y \in Y_0$, so in particular, $y \neq \diff_{Y_0}$. Letting $H(\seq{0,p}) = \seq{a,b}$, this means that $a \in \dom(\eval)$ and $\eval(a) \in \dom(\delta_{Y_0})$, and hence by definition,
		\[
			\delta_{\delta_{\infcomp{Y_0} \times \infcomp{Y_1}}}  H(\seq{0,p}) = \seq{\delta_{Y_0} \eval (a), z}.
		\]
		We conclude that $\delta_{Y_0} \eval (a) \in f \delta_{X_0}(p)$, but since $\delta_{Y_0} \eval(a) = \delta_{Y_0} \Psi_0 H \Phi_0(p)$, this is what we wanted.
	\end{proof}
	
	\begin{lemma}\label{lem:main2}
		Let $h : \subseteq (U,\delta_{U}) \rightrightarrows (V,\delta_{V})$ be a partial multifunction on represented spaces, and suppose $f \sured h$ and $g \sured h$. Then $f \sjoin g \sured h$.
	\end{lemma}
	
	\begin{proof}
		Suppose $f \sured h$ via $\Phi_0$ and $\Psi_0$, and $g \sured h$ via $\Phi_1$ and $\Psi_1$.	Let $\Phi : \subseteq 2^\omega \to 2^\omega$ be the map with domain all pairs $\seq{i,p}$ for $i < 2$ and $p \in \dom(\Phi_i)$, and with $\Phi(\seq{i,p}) = \Phi_i(p)$. Let $\Psi : 2^\omega \to 2^\omega$ be the map $q \mapsto \seq{\monapprox_{\Psi_0(q)}, \monapprox_{\Psi_1(q)}}$. We claim that $f \sjoin g \sured h$ via $\Phi$ and $\Psi$.
		
		Suppose $H \vdash h$ and fix any element in the domain of $(f \sjoin g) (\delta_{X_0 \sqcup X_1})$, which must have the form $\seq{i,p}$ for some $i < 2$. Without loss of generality, assume $i = 0$; a symmetric argument works if $i = 1$. We aim to show that
		\[
			\delta_{\infcomp{Y_0} \times \infcomp{Y_1}} \Psi H \Phi(\seq{0,p}) \in (f \sjoin g)(\delta_{X_0 \sqcup X_1})(\seq{0,p}).
		\]
		Since $\delta_{\infcomp{Y_0} \times \infcomp{Y_1}} \Psi H \Phi(\seq{0,p}) = \delta_{\infcomp{Y_0} \times \infcomp{Y_1}} \Psi H \Phi_0(p)$ and $(f \sjoin g)(\delta_{X_0 \sqcup X_1})(\seq{0,p}) = f\delta_{X_0}(p) \times \infcomp{Y_1}$, this is equivalent to showing
		\begin{equation}\label{eq:1}
			\delta_{\infcomp{Y_0} \times \infcomp{Y_1}} \Psi H \Phi_0(p) \in f\delta_{X_0}(p) \times \infcomp{Y_1}.
		\end{equation}
		%First, notice that $\Phi_0(p) \in \dom(h \delta_U)$.
		Now $\seq{0,p} \in \dom( (f \sjoin g) (\delta_{X_0 \sqcup X_1}))$, so $p \in \dom(f \delta_{X_0})$ by definition. And since $f \sured h$ via $\Phi_0$ and $\Psi_0$ and $H \vdash h$, this implies that
		\begin{equation}\label{eq:2}
			\delta_{Y_0} \Psi_0 H \Phi_0(p) \in f \delta_{X_0} (p).
		\end{equation}
		In particular, this means that $\Psi_0 H \Phi_0(p) \in 2^\omega$, so by Proposition~\ref{prop:approx}, $\monapprox_{\Psi_0 H \Phi_0(p)}$ is total, and by Proposition~\ref{prop:eval},
		\[
			\eval(\monapprox_{\Psi_0 H \Phi_0(p)}) = \Psi_0 H \Phi_0(p).
		\]
		It also means that $\Psi_0 H \Phi_0(p) \in \dom(\delta_{Y_0})$, and so $\eval(\monapprox_{\Psi_0 H \Phi_0(p)}) \in \dom(\delta_{Y_0})$. Now by definition, for some $z \in \infcomp{Y_1}$, we have
		\begin{eqnarray*}
			\delta_{\infcomp{Y_0} \times \infcomp{Y_1}} \Psi H \Phi_0(p) & = & \delta_{\infcomp{Y_0} \times \infcomp{Y_1}}(\seq{\monapprox_{\Psi_0 H \Phi_0(p)}, \monapprox_{\Psi_1 H \Phi_0(p)}})\\
			& = &  (\delta_{Y_0} \eval (\monapprox_{\Psi_0 H \Phi_0(p)}), z)\\
			& = &  (\delta_{Y_0} \Psi_0 H \Phi_0(p), z).
		\end{eqnarray*}
		Combining this with \eqref{eq:2} now gives \eqref{eq:1}. 
	\end{proof}
	
	The proof of the theorem is complete.
\end{proof}

\begin{corollary}\label{cor:lattice}
	The strong Weihrauch degrees form a bounded lattice under $\sured$, with $\sjoin$ as supremum and $\sqcap$ as infimum.
\end{corollary}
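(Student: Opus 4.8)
The plan is to assemble the statement from ingredients already available. Theorem~\ref{thm:main} provides exactly the half concerning joins: for arbitrary partial multifunctions $f$ and $g$ on represented spaces, $f \sjoin g$ is the supremum of $f$ and $g$ under $\sured$. For meets, I would invoke the remark immediately following Theorem~\ref{thm:Wlatt}, namely that the proof of that theorem also shows $f \sqcap g$ to be the infimum of $f$ and $g$ under $\sured$. Both $\sjoin$ and $\sqcap$ are invariant under $\suequiv$ --- for $\sjoin$ this was noted just before Theorem~\ref{thm:main}, and for $\sqcap$ it follows exactly as in the Weihrauch case --- so each descends to a well-defined binary operation on the strong Weihrauch degrees, and by the universal properties just cited these compute the join and the meet there. (Associativity and commutativity up to $\suequiv$, already remarked for $\sjoin$, are in any case automatic consequences of being a supremum, so nothing further needs checking on that score.) This already yields that the strong Weihrauch degrees form a lattice under $\sured$ with $\sjoin$ as supremum and $\sqcap$ as infimum.

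It then remains only to note that this lattice is bounded, for which I would simply cite the existence of a greatest and a least strong Weihrauch degree as recorded in \cite[Section 2.1]{BP-TA}; as observed after Theorem~\ref{thm:Wlatt}, their precise descriptions are somewhat involved but irrelevant here, since only their existence is used.

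I do not anticipate a genuine obstacle: the corollary is essentially a bookkeeping consequence of Theorem~\ref{thm:main}, of the previously known fact that $\sqcap$ gives the infimum on the strong Weihrauch degrees, and of the previously known existence of extremal degrees. The one point worth stating explicitly rather than skipping is the degree-invariance of the two operations, so that the word ``lattice'' is being asserted of the quotient structure and not merely of the underlying preorder of multifunctions; but this invariance is immediate and has already been flagged in the text, so the write-up should be short.
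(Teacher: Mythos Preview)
Your proposal is correct and matches the paper's approach: the corollary is stated without proof, being an immediate consequence of Theorem~\ref{thm:main} for the join, the remark after Theorem~\ref{thm:Wlatt} that $\sqcap$ is the infimum under $\sured$, and the reference to \cite[Section~2.1]{BP-TA} for the top and bottom elements. Your write-up simply makes explicit what the paper leaves implicit, and nothing further is required.
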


As noted above, the $\sqcup$ operation does not give the supremum in the strong Weihrauch degrees, so $\sjoin$ and $\sqcup$ are in general different. However, as the next proposition shows, this is no longer the case if we move from strong Weihrauch degrees to the more general setting of (non-strong) Weihrauch degrees.

\begin{proposition}\label{prop:equiv}
	Let $f : \subseteq (X_0,\delta_{X_0}) \rightrightarrows (Y_0,\delta_{Y_0})$ and $g : \subseteq (X_1,\delta_{X_1}) \rightrightarrows (Y_1,\delta_{Y_1})$ be partial multifunctions on represented spaces. Then $f \sjoin g \uequiv f \sqcup g$.
\end{proposition}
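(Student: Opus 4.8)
The plan is to prove the two reductions $f \sjoin g \ured f \sqcup g$ and $f \sqcup g \ured f \sjoin g$ separately, in each case taking the forward Turing functional $\Phi$ to be (essentially) the identity: both problems have domain $X_0 \sqcup X_1$, so a realizer name $\seq{i,p}$ of an instance of one is already a name of an instance of the other. All of the work is in the backward functional $\Psi$, and the essential feature is that $\ured$, unlike $\sured$, allows $\Psi$ to read the original instance, and in particular the tag $i$.

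For $f \sjoin g \ured f \sqcup g$, one can observe that this already follows from Theorem~\ref{thm:main}: it is immediate that $f \sured f \sqcup g$ and $g \sured f \sqcup g$ (send $p$ to $\seq{i,p}$ and strip the leading bit from the returned solution), and $f \sjoin g$ is the $\sured$-supremum of $f$ and $g$, so $f \sjoin g \sured f \sqcup g$, a fortiori $\ured$. Alternatively it is just as quick directly: with $\Phi(\seq{i,p}) = \seq{i,p}$, a realizer $G$ of $f \sqcup g$ sends $\seq{i,p}$ to some $\seq{i,r}$ with $\delta_{Y_i}(r)$ a solution; let $\Psi$ read $i$ and $r$ off this solution and output $\seq{\monapprox_{\mathrm{id}(r)},\, 0^\omega}$ if $i = 0$ and $\seq{0^\omega,\, \monapprox_{\mathrm{id}(r)}}$ if $i = 1$. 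By Propositions~\ref{prop:approx} and~\ref{prop:eval}, $\monapprox_{\mathrm{id}(r)}$ is computable from $r$ and $\eval(\monapprox_{\mathrm{id}(r)}) = r$, so $\delta_{\infcomp{Y_i}}(\monapprox_{\mathrm{id}(r)}) = \delta_{Y_i}(r)$; as $\delta_{\infcomp{Y_{1-i}}}$ is total, the other coordinate is some (irrelevant) element of $\infcomp{Y_{1-i}}$, and the output accordingly lies in $(f \sjoin g)\delta_{X_0 \sqcup X_1}(\seq{i,p})$.

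For the reverse reduction $f \sqcup g \ured f \sjoin g$, again take $\Phi(\seq{i,p}) = \seq{i,p}$. Given a realizer $G$ of $f \sjoin g$, the value $G(\seq{i,p})$ is a pair $\seq{a,b}$, and if $i = 0$ then $\delta_{\infcomp{Y_0}}(a)$ is a genuine element of $Y_0$ — hence not $\diff_{Y_0}$, so $a \in \dom(\eval)$, $\eval(a) \in \dom(\delta_{Y_0})$, and $\delta_{Y_0}\eval(a) \in f\delta_{X_0}(p)$ — while $b$ may be meaningless; symmetrically when $i = 1$. Accordingly, let $\Psi$ read the tag $i$ from the original instance $\seq{i,p}$ (to which it has access) and output $\seq{0, \eval(a)}$ when $i = 0$ and $\seq{1, \eval(b)}$ when $i = 1$; this is a Turing functional since $\eval$ is one (Proposition~\ref{prop:eval}), and it halts on all relevant inputs. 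Applying $\delta_{Y_0 \sqcup Y_1}$ to the output when $i = 0$ gives $\seq{0, \delta_{Y_0}\eval(a)} \in \{0\} \times f\delta_{X_0}(p) = (f \sqcup g)\delta_{X_0 \sqcup X_1}(\seq{0,p})$, as required, and likewise for $i = 1$.

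The only real point — more a conceptual observation than an obstacle — is the need, in the reverse direction, for $\Psi$ to know which of the two coordinates of the element of $\infcomp{Y_0} \times \infcomp{Y_1}$ it receives is the meaningful one (equivalently, which tag to prepend to its $Y_0 \sqcup Y_1$ output); there is in general no effective way to recover this from the solution alone, since deciding whether a string codes a total monotone approximation that evaluates into $\dom(\delta_{Y_0})$ is not decidable. The instance-access built into $\ured$ supplies exactly this missing bit, which is precisely why $f \sjoin g$ and $f \sqcup g$ are Weihrauch equivalent but, as the surrounding discussion notes, need not be strongly Weihrauch equivalent. Beyond this, the proof is a routine unwinding of the definitions of $\delta_{X_0 \sqcup X_1}$, $\delta_{Y_0 \sqcup Y_1}$, and $\delta_{\infcomp{Y}}$ together with the two propositions on $\eval$ and $\monapprox$.
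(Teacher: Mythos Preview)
Your proof is correct and essentially the same as the paper's. The only organizational difference is that the paper derives $f \sqcup g \ured f \sjoin g$ from the $\ured$-supremum property of $\sqcup$ (together with $f,g \sured f \sjoin g$ from Theorem~\ref{thm:main}) and proves $f \sjoin g \sured f \sqcup g$ directly with the same $\Psi$ you describe, whereas you additionally supply a direct argument for $f \sqcup g \ured f \sjoin g$ using instance-access to the tag $i$; the underlying ideas and functionals are identical.
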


\begin{proof}
	Since $f, g \sured f \sjoin g$ by Theorem~\ref{thm:main}, and $f \sqcup g$ is the supremum of $f$ and $g$ under $\ured$, it follows that $f \sqcup g \ured f \sjoin g$. So, we only need to show that $f \sjoin g \ured f \sqcup g$, and in fact, we show that $f \sjoin g \sured f \sqcup g$. Let $\Phi : 2^\omega \to 2^\omega$ be the identity functional, and let $\Psi : 2^\omega \to 2^\omega$ be defined by
	\[
		\Psi(\seq{0,q}) = \seq{\monapprox_{\Phi(q)},0^\omega}
	\]
	and
	\[
		\Psi(\seq{1,q}) = \seq{0^\omega, \monapprox_{\Phi(q)}}
	\]
	for all $p,q \in 2^\omega$. Fix $H \vdash f \sqcup g$, and any element in the domain of $(f \sjoin g)(\delta_{X_0 \sqcup X_1})$, which must have the form $\seq{i,p}$ for some $i < 2$. We assume $i = 0$; the case $i = 1$ follows by a symmetric argument. We must show that
	\[
		\delta_{\infcomp{Y_0} \times \infcomp{Y_1}} \Psi H \Phi(\seq{0,p}) = \delta_{\infcomp{Y_0} \times \infcomp{Y_1}} \Psi H (\seq{0,p}) \in (f \sjoin g)(\delta_{X_0 \sqcup X_1})(\seq{0,p}).
	\]
	By definition,
	\[
		(f \sjoin g)(\delta_{X_0 \sqcup X_1})(\seq{0,p}) = (f \sjoin g)(\seq{0,\delta_{X_0}(p)}) = f\delta_{X_0}(p) \times \infcomp{Y_1},
	\]
	so the above is equivalent to
	\begin{equation}\label{eq:W_1}
		\delta_{\infcomp{Y_0} \times \infcomp{Y_1}} \Psi H (\seq{0,p}) \in f\delta_{X_0}(p) \times \infcomp{Y_1}.
	\end{equation}
	Since $H \vdash f \sqcup g$, we have
	\[
		\delta_{Y_0 \sqcup Y_1} H(\seq{0,p}) \in (f \sqcup g)(\delta_{X_0 \sqcup X_1})(\seq{0,p}) = \{0\} \times f\delta_{X_0}(p).
	\]
	Thus, it must be that $\delta_{Y_0 \sqcup Y_1} H(\seq{0,p}) = \seq{0,y}$ for some $y \in f\delta_{X_0}(p)$, and hence that $H(\seq{0,p}) = \seq{0,q}$ for some $q$ with $\delta_{Y_0}(q) = y$. Thus, we have
	\begin{equation}\label{eq:W_2}
		\delta_{\infcomp{Y_0} \times \infcomp{Y_1}} \Psi H (\seq{0,p}) = \delta_{\infcomp{Y_0} \times \infcomp{Y_1}} \Psi (\seq{0,q}) = \delta_{\infcomp{Y_0} \times \infcomp{Y_1}} (\monapprox_{\Phi(q)}, 0^\omega).
	\end{equation}
	Since $\Phi(q) = q$ we have $\eval(\monapprox_{\Phi(q)}) = \Phi(q) = q$, so $\eval(\monapprox_{\Phi(q)}) \in \dom(\delta_0)$. We conclude that
	\[
		\delta_{\infcomp{Y_0} \times \infcomp{Y_1}} (\monapprox_{\Phi(q)}, 0^\omega) = \seq{\delta_{Y_0}(q),z}
	\]
	for some $z \in \infcomp{Y_1}$. Combining this with \eqref{eq:W_2} gives \eqref{eq:W_1}.
\end{proof}

\section{Distributivity}\label{S:distr}

Our aim is to examine some of the lattice-theoretic properties of the strong Weihrauch degrees. Recall that a lattice $\mathcal{L} =(L,\vee,\wedge)$ is \emph{distributive} if the operations of join and meet distribute over one another, i.e., if for all $a,b,c \in L$ we have $(a \vee b) \wedge c = (a \wedge c) \vee (b \wedge c)$.
%$(a \wedge b) \vee c = (a \wedge c) \vee (b \wedge c)$.
As noted above, the Weihrauch lattice is distributive. By contrast, we will show below that the strong Weihrauch lattice is not.

We begin with the following result, showing that one half of the distributivity identity in the strong Weihrauch degrees does indeed hold under $\sured$, while the other holds if we replace $\sjoin$ by $\sqcup$.

\begin{proposition}\label{prop:boring}
	Let $f : \subseteq (X_0,\delta_{X_0}) \rightrightarrows (Y_0,\delta_{Y_0})$, $g : \subseteq (X_1,\delta_{X_1}) \rightrightarrows (Y_1,\delta_{Y_1})$, and $h : (U,\delta_U) \to (V,\delta_V)$ be partial multifunctions on represented spaces. Then we have:
	\begin{enumerate}
		\item $(f \sqcap h) \sjoin (g \sqcap h) \sured (f \sjoin g) \sqcap h$;
		\item $(f \sqcup g) \sqcap h \sured (f \sqcap h) \sqcup (g \sqcap h)$.
	\end{enumerate}
\end{proposition}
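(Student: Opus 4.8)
The plan is to prove each of the two reductions directly, in each case exhibiting Turing functionals $\Phi$ and $\Psi$ as in Definition~\ref{def:weihr} and verifying that $\Psi G \Phi \vdash (\text{left-hand problem})$ for every $G \vdash (\text{right-hand problem})$. In both parts the forward functional $\Phi$ is a fixed, obviously computable reshuffling of the pairing brackets and $\sqcup$-tags that encode an instance, so all the content sits in $\Psi$. Part~(2) involves no completed spaces, and should go through by pure bookkeeping with the definitions of $\sqcup$ and $\sqcap$; part~(1) is the one that forces a genuine appeal to $\eval$ and $\monapprox$, and I expect its backward functional to be the main obstacle.

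For part~(2): a name of an instance of $(f \sqcup g) \sqcap h$ has the form $\seq{\seq{i,p},q}$, with $\seq{i,p}$ naming an instance of $f \sqcup g$ and $q$ naming an instance of $h$, and I would let $\Phi$ send it to $\seq{i,\seq{p,q}}$, a name of an instance of $(f \sqcap h) \sqcup (g \sqcap h)$. Unwinding the definitions, a name of a solution of the right-hand side at $\seq{i,\seq{p,q}}$ has the form $\seq{i,\seq{j,r}}$, where $j = 0$ marks a solution coming from $f$ or $g$ and $j = 1$ one coming from $h$; note $\Psi$ can read off $i$ here. I would let $\Psi$ output $\seq{0,\seq{i,r}}$ when $j = 0$ and $\seq{1,r}$ when $j = 1$, and a direct comparison with the definition of $(f \sqcup g) \sqcap h$ confirms this is always a name of a solution at the original instance.

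For part~(1): here $\Phi$ sends a name $\seq{i,\seq{p,q}}$ of an instance of $(f \sqcap h) \sjoin (g \sqcap h)$ to the name $\seq{\seq{i,p},q}$ of an instance of $(f \sjoin g) \sqcap h$. The backward functional faces two difficulties. First, a solution of $(f \sjoin g) \sqcap h$ lives in $(\infcomp{Y_0} \times \infcomp{Y_1}) \sqcup V$, whereas a solution of $(f \sqcap h) \sjoin (g \sqcap h)$ must live in $\infcomp{(Y_0 \sqcup V)} \times \infcomp{(Y_1 \sqcup V)}$; so $\Psi$ must move names \emph{into} a completed space after inserting a $\sqcup$-tag. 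As in the proof of Lemma~\ref{lem:main2}, I would fix the Turing functionals $\Xi_0 : p \mapsto \seq{0,\eval(p)}$ and $\Xi_1 : p \mapsto \seq{1,p}$ (these are Turing functionals because $\eval$ is, by Proposition~\ref{prop:eval}(1)) and have $\Psi$ emit outputs of the form $\monapprox_{\Xi_k(\cdot)}$; Proposition~\ref{prop:eval}(2) then ensures that applying $\eval$ to such an output recovers $\seq{0,\eval(p)}$, respectively $\seq{1,p}$, whenever $p$ is a genuine name. Second --- and this is the step I expect to be the real obstacle --- $\Psi$ sees only the solution $G \Phi(\seq{i,\seq{p,q}})$, not the tag $i$, yet a solution of $(f \sqcap h) \sjoin (g \sqcap h)$ at $\seq{0,\cdot}$ must carry its ``real'' content in the first coordinate while one at $\seq{1,\cdot}$ must carry it in the second, so $\Psi$'s output has to be valid for both possibilities at once. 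I would resolve this by cases on the inner $\sqcup$-tag of the given solution: if it is an $h$-solution, say naming $v \in V$, output $\seq{\monapprox_{\Xi_1(r)}, \monapprox_{\Xi_1(r)}}$, which names the pair $(\seq{1,v}, \seq{1,v})$ --- legitimate on both sides since $\seq{1,v}$ belongs both to the value of $f \sqcap h$ and to the value of $g \sqcap h$ at the relevant instances; if it is a solution $\seq{r_0,r_1}$ of $f \sjoin g$, output $\seq{\monapprox_{\Xi_0(r_0)}, \monapprox_{\Xi_0(r_1)}}$, and observe that the coordinate dictated by the hidden $i$ is a genuine name decoding to an element of the required value of $f \sqcap h$ or $g \sqcap h$, while the other coordinate, even if it is garbage, is harmless because $\delta_{\infcomp{(Y_1 \sqcup V)}}$ (resp.\ $\delta_{\infcomp{(Y_0 \sqcup V)}}$) is total and any element of a completed space, including $\diff$, is admissible there. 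The remaining verification reprises the argument in Lemma~\ref{lem:main1}: a non-$\diff$ value of a completed representation forces the underlying name to be a total monotone approximation in $\dom(\eval)$, which is precisely what is needed to invoke Proposition~\ref{prop:eval}(2).
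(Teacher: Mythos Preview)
Your proposal is correct and follows essentially the same approach as the paper: the forward functionals $\Phi$ are the same bracket reshuffles, and for part~(1) your backward functional $\Psi$ --- built from $\Xi_0 : p \mapsto \seq{0,\eval(p)}$ and $\Xi_1 : p \mapsto \seq{1,p}$ and the case split on the inner $\sqcup$-tag --- coincides with the paper's (which writes the same thing using $\operatorname{id}$ composed with $\seq{0,\eval(\cdot)}$ and $\seq{1,\cdot}$). For part~(2) the paper merely observes that the standard proof of $(f \sqcup g) \sqcap h \ured (f \sqcap h) \sqcup (g \sqcap h)$ is already strong, which is exactly the bookkeeping you wrote out.
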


\begin{proof}
	For part~(1), let $\Phi : \subseteq 2^\omega \to 2^\omega$ be the map $\seq{i,\seq{p_0,p_1}} \mapsto \seq{\seq{i,p_0},p_1}$ for all $i < 2$ and all $p_0,p_1 \in 2^\omega$. Let $\operatorname{id} : 2^\omega \to 2^\omega$ be the identity functional, and let $\Psi : \subseteq 2^\omega \to 2^\omega$ be the map given by
	\[	
		\Psi(\seq{0,\seq{q_0,q_1}}) = \seq{\monapprox_{\operatorname{id}(\seq{0,\eval(q_0)})}, \monapprox_{\operatorname{id}(\seq{0,\eval(q_1)})}}
	\]
	for all $q_0,q_1 \in 2^\omega$, and
	\[
		h(\seq{1,q}) = \seq{\monapprox_{\operatorname{id}(\seq{1,q})}, \monapprox_{\operatorname{id}(\seq{1,q})}}
	\]
	for all $q \in 2^\omega$. We claim that $(f \sqcap h) \sjoin (g \sqcap h) \sured (f \sjoin g) \sqcap h$ via $\Phi$ and $\Psi$.
	
	Fix any $H \vdash (f \sjoin g) \sqcap h$, and any element in the domain of $((f \sqcap h) \sjoin (g \sqcap h)) \delta_{(X_0 \times U) \sqcup (X_1 \times U)}$. This must have the form $\seq{i,\seq{p_0,p_1}}$ for some $i < 2$ and some $p_0$ in the domain of $f \delta_{X_0}$ if $i = 0$ or $g \delta_{X_1}$ if $i = 1$, and some $p_1$ in the domain of $h \delta_{U}$. Assume $i = 0$; a symmetric argument works if $i = 1$. We must then show that
	\[
		\delta_{\infcomp{Y_0 \sqcup V} \times \infcomp{Y_1 \sqcup V}} \Psi H \Phi (\seq{0,\seq{p_0,p_1}}) \in (f \sqcap h) \sjoin (g \sqcap h) \delta_{(X_0 \times U) \sqcup (X_1 \times U)} (\seq{0,\seq{p_0,p_1}}).
	\]
	We have
	\[
		\delta_{\infcomp{Y_0 \sqcup V} \times \infcomp{Y_1 \sqcup V}} \Psi H \Phi (\seq{0,\seq{p_0,p_1}}) = \delta_{\infcomp{Y_0 \sqcup V} \times \infcomp{Y_1 \sqcup V}} \Psi H (\seq{\seq{0,p_0},p_1}),
	\]
	and
	\begin{eqnarray*}
		(f \sqcap h) \sjoin (g \sqcap h) \delta_{(X_0 \times U) \sqcup (X_1 \times U)} (\seq{0,\seq{p_0,p_1}})\\
		= (f \sqcap h) \sjoin (g \sqcap h) (\seq{0,\seq{\delta_{X_0}(p_0),\delta_U(p_1)}})\\
		= (f \sqcap h)(\seq{\delta_{X_0}(p_0),\delta_U(p_1)}) \times \infcomp{Y_1 \sqcup V}\\
		= (f \delta_{X_0}(p_0) \sqcup h \delta_U(p_1)) \times \infcomp{Y_1 \sqcup V}.
	\end{eqnarray*}
	Thus, it is enough to show that
	\[
		\delta_{\infcomp{Y_0 \sqcup V} \times \infcomp{Y_1 \sqcup V}} \Psi H (\seq{\seq{0,p_0},p_1}) \in (f \delta_{X_0}(p_0) \sqcup h \delta_U(p_1)) \times \infcomp{Y_1 \sqcup V}.
	\]
	
	Now since $H \vdash (f \sjoin g) \sqcap h$, we have
	\begin{eqnarray*}
		\delta_{(\infcomp{Y_0} \times \infcomp{Y_1}) \sqcup V} H(\seq{\seq{0,p_0},p_1}) & \in & ((f \sjoin g) \sqcap h) \delta_{(X_0 \sqcup X_1) \times U} (\seq{\seq{0,p_0},p_1})\\
		& = & (f \delta_{X_0} (p_0) \times \infcomp{Y_1}) \sqcup h \delta_U (p_1).
	\end{eqnarray*}
	Therefore, $\delta_{(\infcomp{Y_0} \times \infcomp{Y_1}) \sqcup V} H(\seq{\seq{0,p_0},p_1})$ is either $\seq{0,\seq{y,z}}$ for some $y \in f \delta_{X_0} (p_0)$ and $z \in \infcomp{Y_1}$, or $\seq{1,v}$ for some $v \in h \delta_U (p_1)$. In the first case, since $y \neq \diff_{Y_0}$, it must be that $H(\seq{\seq{0,p_0},p_1}) = \seq{0,\seq{a,q}}$ for some $a \in \dom(\eval)$ with $\eval(a) \in \dom(\delta_{Y_0})$, meaning $\delta_{Y_0}(\eval(a)) = y$. Consequently, $\monapprox_{\operatorname{id}(\seq{0,\eval(a)})} \in \dom(\eval)$ and $\eval(\monapprox_{\operatorname{id}(\seq{0,\eval(a)})}) = \seq{0,\eval(a)} \in \dom(\delta_{Y_0 \sqcup V})$.
	It follows that
	\begin{eqnarray*}
		\delta_{\infcomp{Y_0 \sqcup V} \times \infcomp{Y_1 \sqcup V}} \Psi H (\seq{\seq{0,p_0},p_1}) & = & \delta_{\infcomp{Y_0 \sqcup V} \times \infcomp{Y_1 \sqcup V}} \Psi (\seq{0,\seq{a,q}})\\
		& = & \delta_{\infcomp{Y_0 \sqcup V} \times \infcomp{Y_1 \sqcup V}} (\seq{\monapprox_{\operatorname{id}(\seq{0,\eval(a)})}, \monapprox_{\operatorname{id}(\seq{0,\eval(q)})}})\\
		& = & \seq{\delta_{\infcomp{Y_0 \sqcup V}}(\monapprox_{\operatorname{id}(\seq{0,\eval(a)})}), \delta_{\infcomp{Y_1 \sqcup V}}(\monapprox_{\operatorname{id}(\seq{0,\eval(q)})})}\\
		& = & \seq{\delta_{{Y_0 \sqcup V}}(\eval(\monapprox_{\operatorname{id}(\seq{0,\eval(a)})})), \delta_{\infcomp{Y_1 \sqcup V}}(\monapprox_{\operatorname{id}(\seq{0,\eval(q)})})}\\
		& = & \seq{\seq{0,\delta_{Y_0}(\eval(a))}, \delta_{\infcomp{Y_1 \sqcup V}}(\monapprox_{\operatorname{id}(\seq{0,\eval(q)})})}\\
		& \in & (f \delta_{X_0}(p_0) \sqcup h \delta_U(p_1)) \times \infcomp{Y_1 \sqcup V},
	\end{eqnarray*}
	which is what was to be shown. In the second case, if $\delta_{(\infcomp{Y_0} \times \infcomp{Y_1}) \sqcup V} H(\seq{\seq{0,p_0},p_1}) = \seq{1,v}$ for some $v \in h \delta_U (p_1)$, it must be that $H(\seq{\seq{0,p_0},p_1}) = \seq{1,q}$ for some $q$ with $\delta_V(q) = v$. We then have
	\begin{eqnarray*}
		\delta_{\infcomp{Y_0 \sqcup V} \times \infcomp{Y_1 \sqcup V}} \Psi H (\seq{\seq{0,p_0},p_1}) & = & \delta_{\infcomp{Y_0 \sqcup V} \times \infcomp{Y_1 \sqcup V}} \Psi (\seq{1,q})\\
		& = & \delta_{\infcomp{Y_0 \sqcup V} \times \infcomp{Y_1 \sqcup V}} (\seq{\monapprox_{\operatorname{id}(\seq{1,q})}, \monapprox_{\operatorname{id}(\seq{1,q})}})\\
		& = & \seq{ \delta_{\infcomp{Y_0 \sqcup V}}(\monapprox_{\operatorname{id}(\seq{1,q})}), \delta_{\infcomp{Y_1 \sqcup V}}(\monapprox_{\operatorname{id}(\seq{1,q})}) }\\
		& = & \seq{ \delta_{{Y_0 \sqcup V}}(\eval (\monapprox_{\operatorname{id}(\seq{1,q})})), \delta_{{Y_1 \sqcup V}}(\eval (\monapprox_{\operatorname{id}(\seq{1,q})})) }\\
		& = & \seq{ \delta_{{Y_0 \sqcup V}}(\seq{1,q})), \delta_{{Y_1 \sqcup V}}(\seq{1,q}) }\\
		& = & \seq{\seq{1,\delta_V(q)}, \seq{1,\delta_V(q)}}\\
		& \in & (f \delta_{X_0}(p_0) \sqcup h \delta_U(p_1)) \times \infcomp{Y_1 \sqcup V}.
	\end{eqnarray*}
	This completes the proof of part~(1).
	
	Part~(2) can be proved similarly, but can also be observed more directly: the standard proof that $(f \sqcup g) \sqcap h \ured (f \sqcap h) \sqcup (g \sqcap h)$ actually shows $(f \sqcup g) \sqcap h \sured (f \sqcap h) \sqcup (g \sqcap h)$. We omit the details.
\end{proof}

In the remainder of this section, we will be dealing with multifunctions on Cantor space. We regard Cantor space as a represented space under the trivial (identity) representation, which we also denote by $\delta_{2^\omega}$ for consistency of notation when viewing it as a representation. In this setting, we can then use the following alternative definition of Weihrauch reducibility, which will be slightly easier to work with.

\begin{definition}
	Let $f: \subseteq 2^\omega \rightrightarrows 2^\omega$ and $g: \subseteq 2^\omega \rightrightarrows 2^\omega$ be multifunctions.
	\begin{itemize}
		\item $f \ured g$ if there are Turing functionals $\Phi, \Psi : \subseteq 2^\omega \to 2^\omega$ such that for every $p \in \dom(f)$, $\Phi(p) \in \dom(g)$ and $\Psi(\seq{p,q}) \in f(p)$ for every $q \in g(\Phi(p))$.
		\item $f \sured g$ if there are Turing functionals $\Phi, \Psi : \subseteq 2^\omega \to 2^\omega$ such that for every $p \in \dom(f)$, $\Phi(p) \in \dom(g)$ and $\Psi(q) \in f(p)$ for every $q \in g(\Phi(p))$.
	\end{itemize}
\end{definition}

\noindent See~\cite{DDHMS-2016}, Appendix A, for a discussion and comparison of this approach to that in Definition~\ref{def:weihr}, and for a proof of the equivalence of the two.

The following observation will be useful.

\begin{lemma}\label{lem:simplejoin}
	Let $f: \subseteq 2^\omega \rightrightarrows 2^\omega$ and $g: \subseteq 2^\omega \rightrightarrows 2^\omega$ be given. Let $h : \subseteq 2^\omega \rightrightarrows 2^\omega$ be the multifunction with domain consisting of all pairs $\seq{0,p}$ for $p \in \dom(f)$, and $\seq{1,p}$ for $p \in \dom(g)$, and satisfying the following:
	\begin{itemize}
		\item $h(\seq{0,p})$ consists of all pairs $\seq{a,q}$ with $a,q \in 2^\omega$, where $a$ is a monotone approximation with $a \in \dom(\eval)$ and $\eval(a) \in f(p)$;
		\item $h(\seq{1,p})$ consists of all pairs $\seq{q,a}$ with $q,a \in 2^\omega$, where $a$ is a monotone approximation with $a \in \dom(\eval)$ and $\eval(a) \in g(p)$.
	\end{itemize}
	Then $h \suequiv f \sjoin g$.
\end{lemma}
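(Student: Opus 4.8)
The plan is to observe that $h$ is, up to the standard identification of a multifunction on represented spaces with a multifunction on Cantor space recalled in Section~\ref{S:defns}, nothing other than $f \sjoin g$ itself. Concretely, I would show that
\[
	h = \delta_{\infcomp{2^\omega} \times \infcomp{2^\omega}}^{-1} \circ (f \sjoin g) \circ \delta_{2^\omega \sqcup 2^\omega},
\]
where $2^\omega$ carries the identity representation $\delta_{2^\omega}$; since $f' \suequiv \delta_V^{-1} \circ f' \circ \delta_U$ for every partial multifunction $f' : \subseteq (U,\delta_U) \rightrightarrows (V,\delta_V)$ — in fact the identity functionals witness this reduction in both directions, as the two multifunctions have literally the same set of realizers — the lemma follows at once.

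First I would spell out the representations involved. With $\delta_{2^\omega} = \operatorname{id}$ we have $\delta_{2^\omega \sqcup 2^\omega}(\seq{i,p}) = \seq{i,p}$; and $\delta_{\infcomp{2^\omega}}(a)$ equals $\eval(a)$ when $a \in \dom(\eval)$ — equivalently, when $a$ is a total monotone approximation, the clause $\eval(a) \in \dom(\delta_{2^\omega}) = 2^\omega$ being then automatic — and equals $\diff_{2^\omega}$ otherwise; finally $\delta_{\infcomp{2^\omega} \times \infcomp{2^\omega}}(\seq{a,b}) = \seq{\delta_{\infcomp{2^\omega}}(a), \delta_{\infcomp{2^\omega}}(b)}$.

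Next I would unwind the composite on the right-hand side directly. Its domain is $\{\seq{0,p} : p \in \dom(f)\} \cup \{\seq{1,p} : p \in \dom(g)\}$, which is exactly $\dom(h)$. Taking $p \in \dom(f)$, a pair $\seq{a,b} \in 2^\omega$ belongs to the value at $\seq{0,p}$ iff $\delta_{\infcomp{2^\omega} \times \infcomp{2^\omega}}(\seq{a,b}) \in (f \sjoin g)(\seq{0,p}) = f(p) \times \infcomp{2^\omega}$; since $\delta_{\infcomp{2^\omega}}(b) \in \infcomp{2^\omega}$ unconditionally, this says precisely that $\delta_{\infcomp{2^\omega}}(a) \in f(p)$. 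As $f(p) \subseteq 2^\omega$ and hence $\diff_{2^\omega} \notin f(p)$, this is in turn equivalent to $a \in \dom(\eval)$ together with $\eval(a) \in f(p)$ — which is exactly the defining condition of $h(\seq{0,p})$. A symmetric computation handles $\seq{1,p}$ for $p \in \dom(g)$. Hence the composite equals $h$, and we are done.

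I do not anticipate a genuine obstacle here; the argument is a matter of carefully tracking the definitions. The only point deserving attention is the role of the extra element $\diff_{2^\omega}$: because actual solutions of $f$ and $g$ are never equal to $\diff_{2^\omega}$, demanding that the relevant output coordinate lie in $f(p)$ (resp.\ $g(p)$) already forces that coordinate to be a total monotone approximation with the right $\eval$-value. This is precisely why the at-first-glance redundant requirement ``$a \in \dom(\eval)$'' appears in the statement of the lemma, and it is exactly what the representation $\delta_{\infcomp{2^\omega}}$ supplies for free.
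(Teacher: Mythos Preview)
Your argument is correct and takes a somewhat different, more direct route than the paper. The paper proves the two reductions separately: for $f \sjoin g \sured h$ it observes that $f,g \sured h$ and then invokes Theorem~\ref{thm:main} (the supremum property of $\sjoin$); for $h \sured f \sjoin g$ it argues via the identity functionals by checking that any realizer $H$ of $f \sjoin g$ already satisfies $H(\seq{0,p}) \in h(\seq{0,p})$. You instead establish the single equality $h = \delta_{\infcomp{2^\omega} \times \infcomp{2^\omega}}^{-1} \circ (f \sjoin g) \circ \delta_{2^\omega \sqcup 2^\omega}$ and appeal to the general principle, noted in Section~\ref{S:defns}, that a multifunction on represented spaces and its Cantor-space transfer have the same realizers and hence are strongly Weihrauch equivalent via identities. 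Your route is self-contained (it does not rely on the supremum property) and makes transparent \emph{why} the lemma holds: $h$ is literally the realizer-level description of $f \sjoin g$. The paper's approach, on the other hand, is perhaps more in keeping with the surrounding narrative, since it exercises the just-proved Theorem~\ref{thm:main}. Both identify the key point you highlight at the end --- that because $\diff_{2^\omega} \notin f(p)$, membership of the first output coordinate in $f(p)$ forces that coordinate to be a total monotone approximation.
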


\begin{proof}
	It is clear that $f,g \sured h$, hence $f \sjoin g \sured h$. We thus only need to show that $h \sured f \sjoin g$, and we claim that this is so via the identity map, $\operatorname{id} : 2^\omega \to 2^\omega$, in both directions. To see this, fix $H \vdash f \sjoin g$, and any element in the domain of $h \delta_{2^\omega} = h$. Without loss of generality, assume this has the form $\seq{0,p}$ for some $p \in \dom(f)$; a symmetric argument works if it has the form $\seq{1,p}$ for $p \in \dom(g)$. We must show that
	\[
		\operatorname{id} H \operatorname{id} (\seq{0,p}) = H (\seq{0,p}) \in h \delta_{2^\omega} (p) = h(p). 
	\]
	Since $H \vdash f \sjoin g$, we have that
	\[
		\delta_{ \infcomp{2^\omega} \times \infcomp{2^\omega} } H(\seq{0,p}) \in (f \sjoin g)(\delta_{2^\omega \sqcup 2^\omega})(\seq{0,p}) = (f \sjoin g)(\seq{0,p}) = f(p) \times \infcomp{2^\omega}.
	\]
	Thus, $\delta_{ \infcomp{2^\omega} \times \infcomp{2^\omega} }H(\seq{0,p}) = \seq{y,z}$ for some $y \in f(p)$ and $z \in \infcomp{2^\omega}$. In particular, $y \neq \diff_{2^\omega}$, so by definition, $H(\seq{0,p}) = \seq{a,q}$ for some $a \in \dom(\eval)$ with $y = \eval(a)$. In other words, $\eval(a) \in f(p)$, whence we conclude that $\seq{a,q} \in h(\seq{0,p})$, as desired.
\end{proof}

We now come to our main result in this section, in which we will demonstrate that the inequality of Proposition \ref{prop:boring} cannot in general be reversed.

\begin{theorem}\label{thm:dist}
	The lattice of strong Weihrauch degrees is not distributive.
\end{theorem}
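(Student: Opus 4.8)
The plan is to refute distributivity by showing that the inequality of Proposition~\ref{prop:boring}(1) is strict for a suitable choice of problems; that is, to produce multifunctions $f,g,h$ on Cantor space with $(f \sjoin g) \sqcap h \nsured (f \sqcap h) \sjoin (g \sqcap h)$. Since the reverse reduction always holds, this alone shows the lattice is not distributive. I would take $f$ and $g$ to be one-instance problems, $f(0^{\omega}) = \{A^{*}\}$ and $g(0^{\omega}) = \{B^{*}\}$, and $h$ the problem with instances $\overline{n}$ (for $n \in \omega$, a canonical code for $n$) and $h(\overline{n}) = \{w_{n}\}$, where $A^{*},B^{*},w_{0},w_{1},\dots$ are chosen to be \emph{mutually Turing independent}: none is computable from the join of any finite subcollection of the rest. (Such a family exists by a routine diagonalization; the columns of a sufficiently generic real, for instance, will do.)

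The next step is to replace $f \sjoin g$ and $(f \sqcap h) \sjoin (g \sqcap h)$ by the explicit combinatorial multifunctions furnished by Lemma~\ref{lem:simplejoin} (applied to $f,g$, and then to $f \sqcap h,\ g \sqcap h$), and to spell out, using the definition of $\sqcap$, the solution sets on each side. On the source side the instance $\seq{\seq{0,0^{\omega}},\overline{n}}$ has solutions $\{\seq{0,\seq{b,r}} : \eval(b) = A^{*}\} \cup \{\seq{1,w_{n}}\}$, and $\seq{\seq{1,0^{\omega}},\overline{n}}$ has solutions $\{\seq{0,\seq{r,b}} : \eval(b) = B^{*}\} \cup \{\seq{1,w_{n}}\}$; on the target side the instance $\seq{0,\seq{0^{\omega},\overline{n}}}$ has solutions the pairs $\seq{a,q}$ with $\eval(a) \in \{\seq{0,A^{*}},\seq{1,w_{n}}\}$, and $\seq{1,\seq{0^{\omega},\overline{n}}}$ has solutions the pairs $\seq{q,a}$ with $\eval(a) \in \{\seq{0,B^{*}},\seq{1,w_{n}}\}$. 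Because $\sqcap$ and $\sjoin$ are degree-invariant, it suffices to rule out a strong Weihrauch reduction between these two explicit problems.

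Then I would assume such a reduction, given by Turing functionals $\Phi$ and $\Psi$, and derive a contradiction by analyzing how $\Phi$ routes the source instances. If $\Phi$ ever sends an instance $\seq{\seq{0,0^{\omega}},\overline{n}}$ into the ``$g \sqcap h$ track'' of the target (the instances $\seq{1,\seq{0^{\omega},\overline{m}}}$), then the target realizer may answer with a monotone approximation coding $\seq{0,B^{*}}$, and $\Psi$ would have to manufacture from $B^{*}$ alone either a monotone approximation to $A^{*}$ or the real $w_{n}$ — impossible by independence; symmetrically, $\Phi$ cannot send any $\seq{\seq{1,0^{\omega}},\overline{n}}$ into the ``$f \sqcap h$ track''. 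So $\Phi$ respects the two tracks, say $\Phi(\seq{\seq{0,0^{\omega}},\overline{n}}) = \seq{0,\seq{0^{\omega},\overline{v(n)}}}$. Now comes the core move: I would feed the target the solution $\seq{a,q}$ where $\eval(a) = \seq{1,w_{v(n)}}$ (the $f\sqcap h$-realizer answering on the $h$-side) and $q$ is an \emph{arbitrary} monotone approximation with $\eval(q) = \seq{0,B^{*}}$. Since the unused coordinate of a $\sjoin$-solution is entirely unconstrained — precisely the role of the $\diff$-elements of the completed codomain $\infcomp{Y_{0}} \times \infcomp{Y_{1}}$ — the very same $\seq{a,q}$, with its coordinates read in the other order, is also a legitimate target solution for \emph{every} instance $\seq{\seq{1,0^{\omega}},\overline{m}}$. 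Hence $\Psi(\seq{a,q})$ must lie simultaneously in the solution set of $\seq{\seq{0,0^{\omega}},\overline{n}}$ and of $\seq{\seq{1,0^{\omega}},\overline{m}}$ for all $m$; intersecting these, and using that the $w_{m}$ are distinct, pins $\Psi(\seq{a,q})$ down to the form $\seq{0,\seq{b_{1},b_{2}}}$ with $\eval(b_{1}) = A^{*}$ — but $b_{1}$, and hence $A^{*}$, is computed from $w_{v(n)}$ and $B^{*}$, contradicting independence.

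The step I expect to be the main obstacle is making the case analysis on $\Phi$ genuinely complete: $\Phi$ is free to relabel and to collapse the $h$-instances and to distribute the two tracks of the source over the target in any computable way, so one must verify that every such configuration either feeds some source instance target data it provably cannot convert, or is amenable to the disguised-solution argument above. The one place where the construction of $\sjoin$ (rather than $\sqcup$) is essential is precisely the verification that a single target solution can masquerade as an answer to instances of both tracks at once, which relies on the move to $\infcomp{Y_{0}} \times \infcomp{Y_{1}}$; the only other ingredient is the standard existence of mutually Turing independent reals.
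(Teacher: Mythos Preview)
Your argument is correct, and it is genuinely different from the paper's. The paper takes all three of $f,g,h$ to be single-instance maps with \emph{non-computable, Turing-independent domains}; the independence of the instances is what forces $\Phi$ to respect the two tracks. Then, having forced $\Psi(\seq{a_2,0^\omega}) = \seq{1,q_2}$, the paper uses a \emph{finite-use trick}: it records the use $u$ of the first bit of that computation, builds an approximation $a_1$ to $q_1$ whose support lies above $u$, and observes that the pair $\seq{(a_2\res u)0^\omega,\,a_1}$ agrees with $\seq{a_2,0^\omega}$ below $u$ while also being a valid solution on the other track --- so $\Psi$ is tricked into outputting $\seq{1,q_2}$ from data computable in $q_1$.

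By contrast, you keep the domains of $f$ and $g$ computable (indeed identical) and instead put the independence in the \emph{outputs}; giving $h$ infinitely many instances lets you intersect the source solution sets over all $m$ and thereby eliminate the ``$h$-answer'' possibility without any use bookkeeping. The crucial observation, shared by both proofs, is that the free coordinate in the $\sjoin$ codomain lets a single element serve as a target solution for instances on both tracks simultaneously; you exploit this globally (one solution covers all $m$ at once), whereas the paper exploits it locally via prefix agreement. Your route is arguably more transparent about why the completion $\infcomp{Y_0}\times\infcomp{Y_1}$ matters; the paper's is more parsimonious, needing only six reals with mild incomputability conditions. One small wording slip: where you write ``a legitimate target solution for every instance $\seq{\seq{1,0^\omega},\overline{m}}$'' you mean the \emph{target} instances $\seq{1,\seq{0^\omega,\overline{m}}}$ (so that, via the track-preserving $\Phi$, $\Psi(\seq{a,q})$ must solve every \emph{source} instance $\seq{\seq{1,0^\omega},\overline{m}}$); the argument you intend is clear and goes through.
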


\begin{proof}
	Choose $p_0,p_1,p_2,q_0,q_1,q_2 \in 2^\omega$ with the following properties:
	\begin{itemize}
		\item $p_0 \nTred \seq{p_1,p_2}$ and $p_1 \nTred \seq{p_0, p_2}$;
		\item $q_0 \nTred q_2$ and $q_2 \nTred q_1$.
	\end{itemize}
	Define $f,g,h : \subseteq 2^\omega \to 2^\omega$ to be $\{p_0\} \mapsto \{q_0\}$, $\{p_1\} \mapsto \{q_1\}$, and $\{p_2\} \mapsto \{q_2\}$, respectively. We claim that
	\[
		(f \sjoin g) \sqcap h \nsured (f \sqcap h) \sjoin (g \sqcap h),
	\]
	which gives the theorem.
	
	Seeking a contradiction, suppose $\Phi$ and $\Psi$ actually witness the reduction above. For each $i < 2$, we must then have that
	\[
		\Phi(\seq{\seq{i,p_i},p_2}) = \seq{i,\seq{p_i,p_2}}.
	\]
	For otherwise there would be $i,j < 2$ with $i \neq j$ and
	\[
		\Phi(\seq{\seq{i,p_i},p_2}) = \seq{j,\seq{p_j,p_2}},
	\]
	whence we would have $p_j \Tred \seq{p_i,p_2}$, contrary to our assumption.
	
	Now fix any monotone approximation $a_2$ such that $a_2 \Tred q_2$ and $a_2 \in \dom(\eval)$ and $\eval(a_2) = \seq{0,q_2}$. By Lemma~\ref{lem:simplejoin}, $\seq{a_2,0^\omega}$ is a solution to $\seq{0,\seq{p_0,p_2}}$ in $(f \sqcap h) \sjoin (g \sqcap h)$. Hence, $\Psi(\seq{a_2,0^\omega})$ must be a solution to $\seq{\seq{0,p_0},p_2}$ in $(f \sjoin g) \sqcap h$, and hence be equal either to $\seq{1,q_2}$, or else, again by the lemma, to $\seq{0,\seq{a_0,c}}$ for some monotone approximation $a_0$ with $a_0 \in \dom(\eval)$ and $\eval(a_0) = q_0$. In the latter case, we would have $q_0 \Tred a_0 \Tred a_2 \Tred q_2$, contradicting our choice of $q_0$ and $q_2$. So it must be the first case that applies.
	
	Let $u$ be the use of computing that the first coordinate of $\Psi(\seq{a_2,0^\omega})$ is $1$. Fix a monotone approximation $a_1$ with $\min a_1 > u$ and $a_1 \Tred q_1$ and $a_1 \in \dom(\eval)$ and $\eval(a_1) = q_1$. Let $q = (a \res u) 0^\omega$, noting that $q$ is computable. Then $\seq{q,a_1}$ is a solution to $\seq{1,\seq{p_1,p_2}}$ in $(f \sqcap h) \sjoin (g \sqcap h)$, and $\Psi(\seq{q,a_1})$ must therefore be a solution to $\seq{\seq{1,p_1},p_2}$ in $(f \sjoin g) \sqcap h$. But the first coordinate of $\Psi(\seq{q,a_1})$ agrees with $\Psi(\seq{a_2,0^\omega})$, so we must have that $\Psi(\seq{q,a_1}) = \seq{1,q_2}$. We then have that $q_2 \Tred \seq{q,a_1} \Tred a_1 \Tred q_1$, contradicting our choice of $q_1$ and $q_2$.
\end{proof}

\begin{corollary}
	The strong Weihrauch lattice is not isomorphic to the Weihrauch lattice.	
\end{corollary}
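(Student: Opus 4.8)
The plan is to deduce this immediately from the two main results already in hand. By Theorem~\ref{thm:Wlatt} the Weihrauch lattice is distributive, while by Theorem~\ref{thm:dist} the lattice of strong Weihrauch degrees (which is indeed a lattice, by Corollary~\ref{cor:lattice}) is not. Since distributivity is expressible by an identity in the language of lattices, namely $(a \vee b) \wedge c = (a \wedge c) \vee (b \wedge c)$ for all $a,b,c$, it is preserved by every lattice isomorphism: if $\varphi$ is an isomorphism from a lattice $\mathcal{L}'$ onto a lattice $\mathcal{L}$ and the distributive identity fails at some triple in $\mathcal{L}$, then, since $\varphi$ commutes with $\vee$ and $\wedge$, the identity fails at the $\varphi^{-1}$-image of that triple in $\mathcal{L}'$. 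Hence no distributive lattice can be isomorphic to a non-distributive one.

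Concretely, I would argue by contradiction. Suppose $\varphi$ is a lattice isomorphism from the Weihrauch lattice onto the strong Weihrauch lattice. By Theorem~\ref{thm:dist} there are strong Weihrauch degrees $\mathbf{a}, \mathbf{b}, \mathbf{c}$ with $(\mathbf{a} \vee \mathbf{b}) \wedge \mathbf{c} \neq (\mathbf{a} \wedge \mathbf{c}) \vee (\mathbf{b} \wedge \mathbf{c})$, where $\vee$ is induced by $\sjoin$ and $\wedge$ by $\sqcap$. Applying $\varphi^{-1}$ and using that it commutes with the lattice operations yields $(\varphi^{-1}\mathbf{a} \vee \varphi^{-1}\mathbf{b}) \wedge \varphi^{-1}\mathbf{c} \neq (\varphi^{-1}\mathbf{a} \wedge \varphi^{-1}\mathbf{c}) \vee (\varphi^{-1}\mathbf{b} \wedge \varphi^{-1}\mathbf{c})$ in the Weihrauch lattice, contradicting Theorem~\ref{thm:Wlatt}. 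There is no genuine obstacle to overcome here: all the substantive content lies in Theorems~\ref{thm:Wlatt} and~\ref{thm:dist}. The only point meriting a word of care is that the claimed non-isomorphism is as \emph{lattices}---structures equipped with the operations $\sjoin$/$\sqcap$ and $\sqcup$/$\sqcap$, respectively---rather than merely as partial orders; but this is exactly the sense in which Corollary~\ref{cor:lattice} and Theorem~\ref{thm:Wlatt} describe the two structures, so no additional issue arises.
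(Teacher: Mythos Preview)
Your argument is correct and matches the paper's intended reasoning: the corollary is stated without proof immediately after Theorem~\ref{thm:dist}, relying on exactly the observation that distributivity is a lattice-theoretic invariant, so a distributive lattice (Theorem~\ref{thm:Wlatt}) cannot be isomorphic to a non-distributive one (Theorem~\ref{thm:dist}).
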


\begin{remark}\label{rem:nonunif}
	The above proof does not go through if $\sured$ is replaced by the related \emph{strong computable reducibility} ($\scred$). Along with \emph{computable reducibility} ($\cred$), these form non-uniform variants of strong Weihrauch and Weihrauch reducibility, respectively. (See, e.g.,~\cite{Dzhafarov-2016}, Definition 1.1, for the precise definitions.) The corresponding algebraic structures have not previously been studied, but it is easy to see that they form lattices under $\sqcup$ and $\sqcap$, just as in the Weihrauch case. The distributivity of $\cred$ then follows from the distributivity of $\ured$. For $\scred$, it follows from Proposition \ref{prop:boring}, together with Proposition \ref{prop:equiv}, the proof of which also shows that $\sjoin$ and $\sqcup$ are the same up to strong computable equivalence. We can conclude that the non-distributivity of the strong Weihrauch lattice is not a feature of uniformity alone, or of denying access to the original instance alone, but rather of the two properties in combination.
\end{remark}

We finish by showing that, in spite of Theorem \ref{thm:dist}, the strong Weihrauch lattice is nonetheless very rich. Recall that a set $A \subseteq 2^\omega$ is \emph{Medvedev reducible} to $B \subseteq 2^\omega$ if there is a functional $\Phi$ such that $\Phi(p) \in A$ for every $p \in B$. The \emph{Medvedev degrees} are the equivalence classes under this reducibility. It is easy to see that these form a lattice, with $A \times B$ serving as the join of $A$ and $B$, and $A \sqcup B$ serving as the meet. Sorbi~\cite[Lemma 6.1]{Sorbi-1990} has shown that every countable distributive lattice embeds into the Medvedev lattice. It is easy to see that the Medvedev degrees embed into the Weihrauch degrees as a partial order, via the embedding sending $A \subseteq 2^\omega$ to $0^\omega \mapsto A$, but it was shown by Higuchi and Pauly~\cite[Corollary 5.3]{HP-2013} that this is not a lattice embedding. However, they also established the following reverse-embedding result.

\begin{proposition}[Higuchi and Pauly~\cite{HP-2013}, Lemma 5.6]
	The Medvedev lattice reverse-embeds into the Weihrauch lattice.
\end{proposition}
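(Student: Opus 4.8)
The phrase ``reverse-embeds'' here means: there is an injection of the Medvedev lattice into the \emph{order-dual} of the Weihrauch lattice that preserves (in dualized form) finite joins and meets. The plan is to exhibit such a map explicitly. To a mass problem $A \subseteq 2^\omega$ I associate the partial multifunction $m_A : \subseteq 2^\omega \rightrightarrows 2^\omega$ on Cantor space (under the trivial representation) with $\dom(m_A) = A$ and $m_A(p) = \{0^\omega\}$ for every $p \in A$. The point --- which is what makes the \emph{domain}, rather than the solution set, carry all the information --- is that each $m_A$ is \emph{trivially solvable}: the constant function with value $0^\omega$ is a computable realizer of $m_A$. Throughout I would use the simplified formulation of $\ured$ from the definition preceding this proposition.

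First I would record the basic computation: $m_B \ured m_A$ holds if and only if $A \Mred B$. Indeed, in any Weihrauch reduction $m_B \ured m_A$ the backward functional may be taken to be the constant map with value $0^\omega$ (this is a realizer of $m_B$), so the backward clause holds vacuously; such a reduction therefore amounts to a Turing functional $\Phi$ with $\Phi(p) \in \dom(m_A) = A$ for every $p \in \dom(m_B) = B$, which is exactly a Medvedev reduction of $A$ to $B$. Reading this in both directions shows $A \equiv_{\mathrm{M}} B$ if and only if $m_A \uequiv m_B$, so $A \mapsto m_A$ descends to a well-defined injection $\Theta$ of the Medvedev degrees into the Weihrauch degrees with $A \Mred B$ if and only if $\Theta(B) \ured \Theta(A)$; that is, $\Theta$ is an order-reversing embedding of partial orders.

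Next I would check that $\Theta$ carries the Medvedev join (which is $A \times B$) to the Weihrauch meet, and the Medvedev meet (which is $A \sqcup B$, i.e.\ $(\{0\} \times A) \cup (\{1\} \times B)$) to the Weihrauch join, so that $\Theta$ embeds the Medvedev lattice into the order-dual of the Weihrauch lattice. Unwinding the definitions of $\sqcap$ and $\sqcup$ on multifunctions, $m_A \sqcap m_B$ has domain $A \times B$ and constant value $\{\seq{0,0^\omega}, \seq{1,0^\omega}\}$, while $m_A \sqcup m_B$ has domain $(\{0\} \times A) \cup (\{1\} \times B)$ and value $\{\seq{0,0^\omega}\}$ or $\{\seq{1,0^\omega}\}$ on the respective summands; in particular both are again trivially solvable and have exactly the same domain as $m_{A \times B}$, respectively $m_{A \sqcup B}$. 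It is then routine that any two trivially solvable multifunctions with the same domain are Weihrauch equivalent --- use the identity functional in the forward direction, and in the backward direction a uniformly computable map producing an appropriate solution (a constant, or in the $\sqcup$ case a map that reads off the leading bit of the instance, to which the backward functional has access) --- so $\Theta(A \times B) = \Theta(A) \sqcap \Theta(B)$ and $\Theta(A \sqcup B) = \Theta(A) \sqcup \Theta(B)$ in the Weihrauch degrees.

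I do not expect a serious obstacle: the argument is essentially bookkeeping once the right map is in hand. The one genuinely delicate choice is to encode the mass problem in the \emph{domain} of $m_A$ while keeping the solution set trivial --- the superficially more natural assignment $0^\omega \mapsto A$ is order-preserving but, as recalled just above, fails to be a lattice embedding --- so the real content lies in verifying that the domain encoding interacts correctly with $\sqcap$ and $\sqcup$, which is the computation of the preceding paragraph.
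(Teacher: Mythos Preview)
Your proposal is correct and follows exactly the approach the paper indicates: your $m_A$ is precisely the map $d_A : A \to \{0^\omega\}$ that the paper introduces immediately after this proposition as the embedding used in the Higuchi--Pauly proof, and your verification that $m_B \ured m_A \iff A \Mred B$ together with $m_{A \times B} \uequiv m_A \sqcap m_B$ and $m_{A \sqcup B} \uequiv m_A \sqcup m_B$ mirrors the paper's own proof of the strong Weihrauch analogue (with $\sqcup$ in place of $\sjoin$, and your correct observation that the backward functional may read the tag from the instance in the Weihrauch setting).
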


\noindent The proof uses the following embedding, originally due to Brattka (see~\cite{HP-2013}, Definition 5.5).

\begin{definition}
	Given $A \subseteq 2^\omega$, let $d_A : \subseteq 2^\omega \to 2^\omega$ be the map $A \to \{0^\omega\}$.
\end{definition}

We show that the same map works to reverse-embed the Medvedev degrees into the strong Weihrauch degrees as lattices.

\begin{proposition}
	The Medvedev lattice reverse-embeds into the strong Weihrauch lattice.
\end{proposition}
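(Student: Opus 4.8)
The embedding is again $A \mapsto d_A$, the same map used by Higuchi and Pauly, and the plan is to verify that it is order-reversing, injective on degrees, and that it interchanges the two lattice operations. The key observation is that the backward functional plays no role here: every instance of every $d_A$ has the single computable solution $0^\omega$, so a strong Weihrauch reduction $d_B \sured d_A$ is nothing more than a Turing functional $\Phi$ with $\Phi(p) \in A$ for all $p \in B$; that is, $d_B \sured d_A$ if and only if $A \Mred B$ (using the Cantor-space formulation of $\sured$). Hence $A \mapsto d_A$ is an order-reversing embedding of the Medvedev degrees into the strong Weihrauch degrees as partial orders; in particular $A \equiv_{\mathrm{M}} B$ iff $d_A \suequiv d_B$, so the induced map on degrees is well defined and injective. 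By Corollary~\ref{cor:lattice}, it then remains only to show $d_{A \times B} \suequiv d_A \sqcap d_B$ (Medvedev join to strong Weihrauch meet) and $d_{A \sqcup B} \suequiv d_A \sjoin d_B$ (Medvedev meet to strong Weihrauch join).

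The first equivalence is routine. Identifying everything with multifunctions on Cantor space in the usual way, $d_{A \times B}$ and $d_A \sqcap d_B$ have the same domain $A \times B = \set{\seq{p_0,p_1} : p_0 \in A,\ p_1 \in B}$, each instance of the former has the fixed computable solution $0^\omega$, and each instance of the latter has the fixed computable solution $\seq{0,0^\omega}$; so the identity functional on the instance side together with the appropriate constant functional on the solution side witnesses $\sured$ in both directions.

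The join equivalence is the substantive point, and it is exactly what Lemma~\ref{lem:simplejoin} was set up to handle. Applying that lemma with $f = d_A$ and $g = d_B$ gives $d_A \sjoin d_B \suequiv h$, where $h$ has domain $\set{\seq{0,p} : p \in A} \cup \set{\seq{1,p} : p \in B} = A \sqcup B$, and a solution to $\seq{0,p}$ (resp.\ $\seq{1,p}$) is any pair $\seq{a,q}$ (resp.\ $\seq{q,a}$) with $q \in 2^\omega$ arbitrary and $a$ a total monotone approximation satisfying $\eval(a) = 0^\omega$. Since $\dom(h) = \dom(d_{A \sqcup B})$ and every instance of $d_{A \sqcup B}$ has solution $0^\omega$, the identity and a constant functional give $d_{A \sqcup B} \sured h$. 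For the converse I would fix once and for all a computable total monotone approximation $a^{*}$ of $0^\omega$, for instance $a^{*} = \set{\seq{n,n,0} : n \in \omega}$, and note that $\seq{a^{*},a^{*}}$ is simultaneously a solution to $\seq{0,p}$ for every $p \in A$ and to $\seq{1,p}$ for every $p \in B$; hence the identity on the instance side together with the constant functional $q \mapsto \seq{a^{*},a^{*}}$ witnesses $h \sured d_{A \sqcup B}$. Combining, $d_{A \sqcup B} \suequiv d_A \sjoin d_B$, which finishes the proof.

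The only delicate point — and precisely the reason the corresponding statement with $\sqcup$ in place of $\sjoin$ fails in the strong Weihrauch lattice — is that $\sjoin$ routes outputs through the completed spaces $\infcomp{Y_0} \times \infcomp{Y_1}$ via monotone approximations, so one needs a single computable object serving as an output on both sides of the coproduct at once. Lemma~\ref{lem:simplejoin} isolates this issue and the fixed choice of $a^{*}$ resolves it; everything else is bookkeeping with the identity functional.
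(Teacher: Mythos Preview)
Your proof is correct and uses the same embedding $A \mapsto d_A$ as the paper, with the same verification that $A \Mred B \iff d_B \sured d_A$ and essentially the same treatment of the meet $d_{A \times B} \suequiv d_A \sqcap d_B$. The one genuine tactical difference is in the join equivalence $d_{A \sqcup B} \suequiv d_A \sjoin d_B$. The paper never unpacks $\sjoin$ here: for $d_A \sjoin d_B \sured d_{A \sqcup B}$ it simply observes $d_A, d_B \sured d_{A \sqcup B}$ and invokes the universal property established in Theorem~\ref{thm:main}; for the converse it takes the identity on instances and the constant map $q \mapsto 0^\omega$ on solutions. You instead route through Lemma~\ref{lem:simplejoin} to get a concrete description $h$ of $d_A \sjoin d_B$ and then exhibit the fixed computable pair $\seq{a^{*},a^{*}}$ as a universal solution. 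Both arguments are short; the paper's is slightly slicker because it lets the lattice structure do the work, while yours has the virtue of making explicit exactly why the completed output space in $\sjoin$ (as opposed to $\sqcup$) is what allows a single computable output to serve both sides of the coproduct---a point you correctly flag in your closing paragraph.
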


\begin{proof}
	Given $A,B \subseteq 2^\omega$, if $\Phi$ is a Turing functional such that $\Phi(p) \in A$ for every $p \in B$ then $d_B \sured d_A$ via $\Phi$ and the identity. Conversely, if $d_B \sured d_A$ via $\Phi$ and $\Psi$ then $\Phi(p) \in A$ for every $p \in B$. We have $d_A,d_B \sured d_{A \sqcup B}$, so $d_A \sjoin d_B \sured d_{A \sqcup B}$. In the other direction, we have $d_{A \sqcup B} \sured d_A \sjoin d_B$ via the identity and the constant $q \mapsto 0^\omega$ map. Similarly, we have $d_{A \times B} \sured d_A,d_B$ and hence $d_{A \times B} \sured d_A \sqcap d_B$. And $d_A \sqcap d_B \sured d_{A \times B}$ via the identity map and the map $q \mapsto \seq{0,0^\omega}$.
\end{proof}

\begin{corollary}
	Every countable distributive lattice can be embedded into the strong Weihrauch lattice.
\end{corollary}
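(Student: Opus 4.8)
The plan is to obtain this as an immediate consequence of the preceding two results, once we keep track of which operations play the role of join and which play the role of meet. The key preliminary observation is that the class of countable distributive lattices is closed under passing to the order-dual: if $\mathcal{L} = (L,\vee,\wedge)$ is a countable distributive lattice, then $\mathcal{L}^{\mathrm{op}} = (L,\wedge,\vee)$ is again a countable distributive lattice.

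First I would apply Sorbi's theorem (Lemma 6.1 of~\cite{Sorbi-1990}) to $\mathcal{L}^{\mathrm{op}}$, obtaining a lattice embedding $e$ of $\mathcal{L}^{\mathrm{op}}$ into the Medvedev lattice. Next, the Proposition just proved gives, via the map $A \mapsto d_A$, a reverse-embedding $r$ of the Medvedev lattice into the strong Weihrauch lattice: $r$ is an injection that is order-reversing and that interchanges joins with meets. (This is precisely the content of $d_A \sjoin d_B \suequiv d_{A \sqcup B}$, where $A \sqcup B$ is the Medvedev \emph{meet}, and $d_A \sqcap d_B \suequiv d_{A \times B}$, where $A \times B$ is the Medvedev \emph{join}.)

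Finally I would observe that the composition $r \circ e$ is again an injection that is order-reversing and interchanges joins with meets, i.e., it is a reverse-embedding of $\mathcal{L}^{\mathrm{op}}$ into the strong Weihrauch lattice; but unwinding the definitions, a reverse-embedding of $\mathcal{L}^{\mathrm{op}}$ into a lattice $\mathcal{M}$ is exactly an order-preserving, join- and meet-preserving embedding of $\mathcal{L} = (\mathcal{L}^{\mathrm{op}})^{\mathrm{op}}$ into $\mathcal{M}$. Hence $r \circ e$, read as a map on $\mathcal{L}$, is the desired lattice embedding. The argument is pure bookkeeping, and the only point requiring care is the dualization step — tracking that the reverse-embedding from the Medvedev lattice forces us to start from $\mathcal{L}^{\mathrm{op}}$ rather than $\mathcal{L}$ itself; beyond that there is no real obstacle, since both ingredients are already in hand.
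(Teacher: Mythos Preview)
Your proposal is correct and matches the paper's (implicit) argument: the corollary is stated without proof in the paper, as it is intended to follow immediately from Sorbi's embedding of countable distributive lattices into the Medvedev lattice together with the preceding reverse-embedding of the Medvedev lattice into the strong Weihrauch lattice. You have correctly identified and handled the one point that needs care, namely that the reverse-embedding forces a dualization, which is harmless since the class of countable distributive lattices is self-dual.
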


\end{document}